\def \N {\mathcal{N}}
\def \PP {\mathcal{P}}
\def \XX {\mathcal{X}}
\def \dist {{\rm dist}}
\DeclareMathOperator{\Rm}{Rm}
\DeclareMathOperator{\Ric}{Ric}
\DeclareMathOperator{\tr}{tr}
\newcommand*{\rom}[1]{\rm {\expandafter\@slowromancap\romannumeral #1@}}
\def \tg {\tilde{g}}
\def \bg {\overline{g}}
\def \tf {\tilde{f}}
\def \bf {\overline{f}}
\def \whg {\widehat{g}}
\def \R {\mathfrak{R}}
\def \div {{\rm div}}
\def \EE {\mathcal{E}}
\numberwithin{equation}{section}
\newtheorem{Theorem}{Theorem}[section]
\newtheorem{Proposition}[Theorem]{Proposition}
\newtheorem{Lemma}[Theorem]{Lemma}
\newtheorem{Corollary}[Theorem]{Corollary}
\theoremstyle{definition}
\newtheorem{Definition}[Theorem]{Definition}
\title{The rate of $\mathbb{F}$-convergence for Ricci flows with closed and smooth tangent flows \\ \smallskip 
\emph{\footnotesize In dedication to the $70$th birthday of Professor Peter Li}}
\author{Pak-Yeung Chan, Zilu Ma, and Yongjia Zhang}
\begin{document}

\maketitle

\begin{abstract}
    This article is a continuation of \cite{CMZ21c}, where we proved that a Ricci flow with a closed and smooth tangent flow has unique tangent flow, and its corresponding forward or backward modified Ricci flow converges in the rate of $t^{-\beta}$ for some $\beta>0$. In this article, we calculate the corresponding $\mathbb{F}$-convergence rate: after being scaled by a factor $\lambda>0$, a Ricci flow with closed and smooth tangent flow is $|\log \lambda|^{-\theta}$ close to its tangent flow in the $\mathbb{F}$-sense, where $\theta$ is a positive number, $\lambda\gg 1$ in the blow-up case, and $\lambda\ll 1$ in the blow-down case.
\end{abstract}

\section{Introduction}

In \cite{Bam20a, Bam20b, Bam20c}, the newly proposed and studied notion of $\mathbb{F}$-convergence has greatly expanded the horizon of the field of Ricci flow. Bamler's notion of $\mathbb{F}$-distance can be regarded as the flow version of the Gromov-$W_1$-Wasserstein distance studied by Sturm \cite{St06a,St06b}; the latter measures the closeness of two metric spaces endowed with probability measures, whereas the former is a distance between two metric flows (see \cite{Bam20b}). As an effective version of weak convergence, the success of the $\mathbb{F}$-convergence reveals that the Ricci flow and the Cheeger-Colding theory \cite{CC97,CC20a,CC20b} are united on the fundamental level, since manifolds with Ricci curvature bounded from below can be regarded as super Ricci flows, for which Bamler's $\mathbb{F}$-compactness theorems \cite{Bam20b} are applicable.

One of the central problems in the field of Cheeger-Colding theory is that of tangent cones, and, in particular, of their uniqueness. For instance, Cheeger-Tian \cite{CT94} proved the uniqueness of tangent cones at infinity for Ricci flat
manifolds with Euclidean volume growth under certain curvature and integrability assumptions.
This result was later strengthened by Colding-Minicozzi \cite{CM14}, who also proved an analogous result for the mean curvature flow \cite{CM15}---that if a tangent flow at a point is a
cylinder, then it is the unique tangent flow at that point. For the uniqueness of tangent cones in the field of minimal surface, refer to \cite{AA81,S83,W83}. Recently, Colding-Minicozzi \cite{CM21} proved that the cylindrical singularities in the Ricci flow are isolated singularities, and that their appearance implies the uniqueness of the blow-up limit.

In the previous paper of the authors \cite{CMZ21c}, following the methods of \cite{Se06,Ac12,SW15}, it is proved that if a Ricci flow has closed and smooth tangent flow, then the tangent flow must be unique, and is the canonical form of a closed shrinking gradient Ricci soliton. Here the tangent flow is either of an ancient solution at infinity, or of a Ricci flow developing a finite-time singularity at the singular time. Furthermore, we have also proved that the corresponding modified forward or backward Ricci flow converges to the shrinker metric at the rate of $t^{-\beta}$ as $t\to\infty$; see Section 2 for more details.

The modified  Ricci flow is the gradient flow of Perelman's $\mu$-functional. It differs from the Ricci flow by a $1$-parameter family of self-diffeomorphisms. The convergence of the modified Ricci flow obviously implies the Cheeger-Gromov-Hamilton convergence of the scaled sequence of the corresponding Ricci flow, and therefore its convergence in the $\mathbb{F}$-sense (c.f. \cite[Theorem 6.1]{CMZ21a}). Nevertheless, \cite[Theorem 6.1]{CMZ21a} is only a qualitative result, and no comparison between the rates of the two kinds of convergence is made. In this paper, we shall prove that for a Ricci flow with smooth and closed tangent flow, the scaled metric flow has a convergence rate of the form $|\log \lambda|^{-\beta}$, where $\lambda>0$ is the scaling factor, and $\lambda\nearrow\infty$ or $\lambda \searrow 0$ when the tangent flow is a blow-up or blow-down limit, respectively.

Let us recall the settings of \cite{CMZ21c}, which is identical to that of the present article. We consider a metric flow $\displaystyle\left(\XX,\mathfrak{t},(\dist_t)_{t\in I},(\nu_{x\,|\, s})_{x\in \XX,s\in I,s\leq\mathfrak{t}(x)}\right)$ induced by a smooth Ricci flow $(M^n,g_t)_{t\in I}$, where $M$ is a closed manifold. 
See \cite[Definition 5.1]{Bam20b} for the definition of metric flow pairs. 
For any $\lambda>0$, we shall use the notation $\XX^{0,\lambda}$ to represent the scaled metric flow (\cite[Section 6.8]{Bam20b}):
$$\XX^{0,\lambda}=\left(\XX,\lambda^{2}\mathfrak{t},(\lambda\dist_{\lambda^{-2}t})_{t\in \lambda^2 I},(\nu_{x\,|\, \lambda^{-2}s})_{x\in \XX,s\in \lambda^2 I,s\leq\mathfrak{t}(x)}\right),\quad t\in \lambda^2 I.$$
\begin{enumerate}[(1)]
    \item If $(M,g_t)$ is ancient, then we assume that $I=(-\infty,0]$ and hence $\XX=M^n\times(-\infty,0]$. Let us fix a point $(p_0,0)\in M\times\{0\}=\XX_0$ and let $\mu_t:=\nu_{p_0,0\,|\,t}$ be the conjugate heat kernel based at $(p_0,0)$. By Bamler's compactness theorems \cite{Bam20b}, for any sequence $\{\tau_i\}_{i=1}^\infty$ with $\tau_i\nearrow\infty$, we have
    \begin{eqnarray}\label{the_F_convergence}
\left(\XX^{0,(\tau_i)^{-\frac{1}{2}}},\left(\mu_{\tau_it}\right)_{t\in(-\infty,0]}\right)
\xrightarrow{\makebox[1cm]{$\mathbb{F}$}}
\left(\XX^\infty,(\mu^\infty_t)_{t\in -(\infty,0)}\right)
\end{eqnarray}
after passing to a subsequence. Here $\XX^\infty$ is a metric flow over $(-\infty,0]$. If the original ancient solution $(M,g_t)$ has uniformly bounded Nash entropy, that is, if
\begin{eqnarray}\label{Nash_bound}
\text{there exist $(p,t)\in M^n\times(-\infty,0]$ and  $Y<\infty$ such that}\quad \N_{p,t}(\tau)\geq -Y\text{ for all }\tau>0,
\end{eqnarray}
then the singular set of $\XX^\infty$ has parabolic Minkowski codimension no smaller than $4$, and $\left(\XX^\infty,(\mu^\infty_t)_{t\in -(\infty,0)}\right)$ is a metric soliton; see \cite{Bam20c}.
\item If $(M,g_t)$ has finite-time singularity, then we assume that $I=[-T,0)$ and $t=0$ is the singular time. In this case, the metric flow $\XX$ is not defined at $t=0$. We will let $(\mu_t)_{t\in I}$ be a singular conjugate heat kernel based at $t=0$ as defined in \cite{MM15}. By Bamler's compactness theorem \cite{Bam20b} again, for any sequence $\{\tau_i\}_{i=1}^\infty$ with $\tau_i\searrow 0$, we also have that (\ref{the_F_convergence}) holds after passing to a subsequence. In this case, the sequence is automatically noncollapsed, and the limit $\left(\XX^\infty,(\mu^\infty_t)_{t\in -(\infty,0)}\right)$ is a metric soliton whose singular set has parabolic Minkowski codimension no smaller than $4$ \cite{Bam20c}.
\end{enumerate}

The results of this paper are as follows.

\begin{Theorem}\label{Thm_main_1}
Let $(M,g_t)_{t\in(-\infty,0]}$ be an ancient Ricci flow satisfying \eqref{Nash_bound}, where $M$ is a smooth and closed manifold. Suppose that there is a sequence $\tau_i\nearrow\infty$, such that the metric flow $\XX^\infty$ given in \eqref{the_F_convergence} is induced by a smooth and closed Ricci flow. Then the following holds. For any $A>1$, there are constants $\overline\lambda(A)>0$ and $C(A)<\infty$, which also depend on $\XX^\infty$, such that
\begin{equation*}
    \dist_{\mathbb{F}}^{[-A,-1/A]}\left(\left(\XX^{0,\lambda},\mu_{\lambda^{-2}t}\right),(\XX^\infty,\mu^\infty_t)\right)\leq C(-\log\lambda)^{-\beta}\quad \text{whenever}\quad \lambda\leq\overline\lambda,
\end{equation*}
where $\beta>0$ depends only on $\XX^\infty$.
\end{Theorem}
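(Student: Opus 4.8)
The plan is to convert the smooth, polynomial-in-time convergence of the modified Ricci flow established in \cite{CMZ21c} into an $\mathbb{F}$-bound, exploiting the invariance of metric flow pairs under rescalings and time-dependent diffeomorphisms together with a quantitative $\dist_{\mathbb{F}}$-continuity estimate valid on compact time intervals bounded away from the singular time. First I would fix the dictionary between the blow-down scale $\lambda$ and the logarithmic time of the modified flow. Write the canonical form of the shrinker tangent flow as $g^\infty_t=(-t)(\phi^\infty_t)^*g_\infty$, with conjugate heat kernel $\mu^\infty_t$, and introduce $\sigma=\log(-t)$. By \cite{CMZ21c} there are $\sigma_0<\infty$ and diffeomorphisms $\psi_\sigma$ of $M$ such that $\hat g_\sigma:=e^{-\sigma}\psi_\sigma^*g_{-e^\sigma}$ satisfies $\|\hat g_\sigma-g_\infty\|_{C^k(g_\infty)}\le C_k\sigma^{-\beta}$ for all $\sigma\ge\sigma_0$ and all $k$, where $\beta>0$ depends only on $\XX^\infty$. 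A direct computation shows that for $\lambda<1$ and $t\in[-A,-1/A]$ the rescaled metric $\lambda^2 g_{\lambda^{-2}t}$ equals $(-t)$ times a diffeomorphic pullback of $\hat g_{\sigma(t)}$ with $\sigma(t)=2|\log\lambda|+\log(-t)\in[\,2|\log\lambda|-\log A,\;2|\log\lambda|+\log A\,]$, and the same for $g^\infty_t$ with $\hat g_{\sigma(t)}$ replaced by $g_\infty$; hence there is a time-dependent diffeomorphism $\Phi^\lambda_t\colon M_\infty\to M$ (of the form $\psi_{\sigma(t)}\circ\phi^\infty_t$) with $\|(\Phi^\lambda_t)^*(\lambda^2 g_{\lambda^{-2}t})-g^\infty_t\|_{C^k(g^\infty_t)}\le C(A,k)(-\log\lambda)^{-\beta}$ for all $t\in[-A,-1/A]$, once $\lambda\le\overline\lambda(A)$, the threshold $\overline\lambda(A)>0$ being exactly what is needed to guarantee $\sigma(t)\ge\max(\sigma_0,|\log\lambda|)$.

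Next I would obtain the corresponding rate for the conjugate heat kernels. The rescaled measure $\mu_{\lambda^{-2}t}$ solves the conjugate heat equation along $\lambda^2 g_{\lambda^{-2}t}$, so in the logarithmic frame it is a long-time ($\approx\sigma(t)\approx 2|\log\lambda|$) evolution of a conjugate heat flow on a background that is $\sigma^{-\beta}$-close to the static soliton $(g_\infty,f_\infty)$. Since the soliton measure is the stationary solution in this frame and the weighted Laplacian $\Delta_{f_\infty}$ has a spectral gap on the orthogonal complement of the finite-dimensional space of soliton symmetry modes, an ODE/Duhamel comparison of the form $\frac{d}{d\sigma}\|e_\sigma\|\le -c\|e_\sigma\|+C\sigma^{-\beta}$ gives $d_{W_1}\big((\Phi^\lambda_t)_*\mu_{\lambda^{-2}t},\mu^\infty_t\big)\le C(A)(-\log\lambda)^{-\beta}$; the contribution of the fixed basepoint $(p_0,0)$ decays like $e^{-c\sigma}$, which is negligible compared to $(-\log\lambda)^{-\beta}$, and the symmetry modes are absorbed into the choice of $\Phi^\lambda_t$. (If \cite{CMZ21c} already supplies convergence of the rescaled conjugate heat kernel at rate $\sigma^{-\beta}$, this step is immediate.)

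It then remains to prove a quantitative $\dist_{\mathbb{F}}$-continuity statement: if $(M_\infty,g^1_t)_{t\in J}$ and $(M_\infty,g^2_t)_{t\in J}$ are smooth Ricci flows on a fixed closed manifold over an interval $J\supset[-A,-1/A]$ with $\|g^1_t-g^2_t\|_{C^k(g^2_t)}\le\varepsilon$, and $\mu^1_t,\mu^2_t$ are conjugate heat flows with $d_{W_1}(\mu^1_t,\mu^2_t)\le\varepsilon$, then the induced metric flow pairs satisfy $\dist_{\mathbb{F}}^{[-A,-1/A]}\le C(A,k,g^2)\,\varepsilon$ (any fixed positive power of $\varepsilon$ would do). The proof builds a correspondence by taking on each time slice the common metric space $Z_t:=(M_\infty\sqcup M_\infty,d^Z_t)$, where $d^Z_t$ restricts to $d_{g^i_t}$ on the two copies and identifies each point with its twin at distance at most $C\varepsilon\cdot{\rm diam}$ (this is a genuine metric because $C^0$-closeness of the Riemannian metrics makes the two distances bi-Lipschitz with constant $1+O(\varepsilon)$), takes $q_t$ to be the pushforward of a near-optimal $W_1$-coupling of $\mu^1_t$ and $\mu^2_t$, and bounds the remaining conjugate-heat term $d_{W_1}^{Z_s}(\nu^1_{x\,|\,s},\nu^2_{x\,|\,s})$ for $s\le t$ in the window via parabolic stability: on $[-A,-1/A]$, away from the singular time, the heat kernels $K_{g^i}(x,t;\cdot,s)$ are positive, smooth, uniformly bounded solutions of linear parabolic equations with $C^k$-close coefficients, so Schauder / $L^p$ estimates give $\|K_{g^1}(x,t;\cdot,s)-K_{g^2}(x,t;\cdot,s)\|_{C^0}\le C\varepsilon$ uniformly in $x,t,s$. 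Applying this with $\varepsilon=C(A)(-\log\lambda)^{-\beta}$, after identifying $M$ with $M_\infty$ along $\Phi^\lambda_t$, yields the theorem with the claimed $\overline\lambda(A)$, $C(A)$, and with $\beta$ the exponent of \cite{CMZ21c}; the finite-time (blow-up) case is identical after exchanging the roles of $t\to0$ and $t\to-\infty$.

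The main obstacle I expect is the conjugate heat kernel step: upgrading the metric convergence of \cite{CMZ21c} to a genuine rate for $(\Phi^\lambda_t)_*\mu_{\lambda^{-2}t}\to\mu^\infty_t$, and correctly gauging the finitely many zero modes of $\Delta_{f_\infty}$ coming from the shrinker's isometries into $\Phi^\lambda_t$ so that the ODE comparison closes — this is the only place where analytic input genuinely beyond \cite{CMZ21c} (and the essentially soft correspondence construction) is needed. Everything else is bookkeeping: the rescaling/diffeomorphism invariance of $\dist_{\mathbb{F}}$, the elementary logarithmic-time dictionary turning $\sigma^{-\beta}$ into $(-\log\lambda)^{-\beta}$, and uniform parabolic estimates on a fixed compact interval bounded away from the singularity.
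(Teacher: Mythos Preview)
Your overall strategy matches the paper's three-step structure: metric closeness on $I_A$ from \cite{CMZ21c}, a rate for the referential conjugate heat flow via a spectral-gap/ODE comparison, and an explicit correspondence yielding $\mathbb{F}$-continuity. Two technical points, however, are handled differently in the paper and deserve attention.

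First, your time-dependent identification $\Phi^\lambda_t=\psi_{\sigma(t)}\circ\phi^\infty_t$ does give $C^k$-closeness slice by slice, but the pulled-back flow is then no longer a Ricci flow, so an $\mathbb{F}$-continuity lemma stated (as yours is) for two Ricci flows does not apply as written. The paper circumvents this by invoking the convergence-stability theorem of Bahuaud--Guenther--Isenberg \cite{BGI20} to produce a single \emph{time-independent} diffeomorphism $\Psi_\lambda$ with $\sup_{t\in I_A}\|\Psi_\lambda^*g^\lambda_t-g^o_t\|_{C^{k-2,\gamma}}\le C(A)(-\log\lambda)^{-\theta}$; both sides are then genuine Ricci flows, and the paper's $W_1$ almost-monotonicity (which relies on Ricci-flow-specific estimates from \cite{Bam20a,Bam20c}) applies directly. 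Your crude parabolic heat-kernel comparison could in principle replace this on a compact interval with bounded geometry, but that is a separate argument you would need to carry out.

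Second, there are no ``symmetry modes'' to gauge away in the conjugate heat kernel step: the relevant operator is the scalar drifted Laplacian $-\Delta^{f_o}_{g_o}$, whose kernel is just the constants (handled by unit-mass normalization) and whose first nonzero eigenvalue exceeds $\tfrac12$ \cite[Lemma~3.5]{SW15}. The paper's implementation compares $\bar f_s$ not directly to $f_o$ but to the minimizer $f_{\bg_s}=P(\bg_s)$, so the forcing term becomes $(DP_{\bg_s}-\tfrac12\tr_{\bg_s})(\partial_s\bg_s)$; the genuinely new analytic input (beyond \cite{CMZ21c}) is a proof that $DP$ has uniformly bounded operator norm near $g_o$, obtained via the implicit function theorem. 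This bounded-$DP$ result is used a second time to pass from closeness of metrics to closeness of minimizers, bridging the metric estimate and the referential-flow estimate. This is precisely the obstacle you flag, and your Duhamel sketch is on the right track once the symmetry-mode language is dropped.
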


\begin{Theorem}\label{Thm_main_2}
Let $(M,g_t)_{t\in[-T,0)}$ be a Ricci flow, where $M$ is a smooth and closed manifold, and $t=0$ is the singular time. Suppose that there is a sequence $\tau_i\searrow 0$, such that the metric flow $\XX^\infty$ given in \eqref{the_F_convergence} is induced by a smooth and closed Ricci flow. Then the following holds. For any $A>1$, there are constants $\underline\lambda(A)<\infty$ and $C(A)<\infty$, which also depend on $\XX^\infty$, such that
\begin{equation*}
    \dist_{\mathbb{F}}^{[-A,-1/A]}\left(\left(\XX^{0,\lambda},\mu_{\lambda^{-2}t}\right),(\XX^\infty,\mu^\infty_t)\right)\leq C(\log\lambda)^{-\beta}\quad \text{whenever}\quad \lambda\geq\underline\lambda,
\end{equation*}
where $\beta>0$ depends only on $\XX^\infty$.
\end{Theorem}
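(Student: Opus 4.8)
The strategy is to leverage the already-established $t^{-\beta}$-decay of the modified Ricci flow from \cite{CMZ21c} and to convert it into an $\mathbb{F}$-distance estimate on the compact time interval $[-A,-1/A]$. I would concentrate on Theorem~\ref{Thm_main_1} (the blow-up case); Theorem~\ref{Thm_main_2} is handled by the same argument with $\lambda\searrow 0$ replaced by $\lambda\nearrow\infty$ and with the singular conjugate heat kernel $(\mu_t)_{t\in[-T,0)}$ in place of $(\mu_t)_{t\in(-\infty,0]}$, using that the finite-time case is automatically noncollapsed.

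\emph{Step 1: Set up the correspondence.} By \cite{CMZ21c}, the tangent flow is unique and is the canonical form of a closed shrinking gradient Ricci soliton; moreover, after the appropriate diffeomorphism correction, the metrics $\widehat{g}_t$ of the modified flow satisfy $\|\widehat g_t - \bar g_t\|_{C^k(\bar g_t)} \le C t^{-\beta}$ (in suitable rescaled time) for every $k$, where $\bar g_t$ is the shrinker canonical form. Fix $A>1$ and write $\lambda = e^{-s}$ with $s\to\infty$, so that the rescaled flow $\XX^{0,\lambda}$ restricted to $[-A,-1/A]$ corresponds, under the diffeomorphisms $\phi_s$ produced in \cite{CMZ21c}, to metrics that are $C^k$-close to $\bar g_t$ with error bounded by $C\,|{-\log\lambda}|^{-\beta} = C s^{-\beta}$. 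The diffeomorphisms $\phi_s$ give a natural time-preserving map between the underlying manifolds of $\XX^{0,\lambda}$ and $\XX^\infty$ over $[-A,-1/A]$.

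\emph{Step 2: From smooth closeness to a correspondence of metric flow pairs.} Using the $C^k$-closeness, I would build an explicit correspondence (in the sense of \cite[Section~5]{Bam20b}) between the two metric flow pairs: the map $\phi_s$ distorts the distances $\dist_t$ by a factor $1+O(s^{-\beta})$ on $[-A,-1/A]$, and the conjugate heat kernel measures $\mu_{\lambda^{-2}t}$ are pushed forward to measures that are close to $\mu^\infty_t$ in, say, the $W_1$-Wasserstein distance, with error $O(s^{-\beta})$ — this last point follows from standard parabolic estimates comparing conjugate heat kernels of uniformly $C^k$-close flows on a fixed compact interval bounded away from the singular time, together with the convergence $\mu_{\lambda^{-2}t}\to\mu^\infty_t$ and quantitative control on the rate of that convergence coming from the same soliton-stability analysis. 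Plugging these two pieces of data into the definition of $\dist_{\mathbb{F}}^{[-A,-1/A]}$ (an infimum over correspondences of a sup-of-integrals of $W_1$-distances, see \cite[Definition~5.6]{Bam20b}) yields the bound $C(A)\,(-\log\lambda)^{-\beta}$.

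\emph{Main obstacle.} The genuinely delicate step is Step~2: the $t^{-\beta}$-estimate of \cite{CMZ21c} controls the \emph{modified} flow, which differs from the Ricci flow by a family of self-diffeomorphisms whose generating vector fields one must show do not drift too far — otherwise the correspondence $\phi_s$, while existing, would badly distort the conjugate heat kernel measures. One must therefore integrate the soliton vector field estimates to control $\phi_s$ in, say, $C^1$ uniformly on $[-A,-1/A]$, and then track how a $C^k$-perturbation of the background geometry perturbs the conjugate heat kernel based at the singular time — the latter requires care near the endpoint $t=-1/A$ (where the kernel concentrates as a $\delta$-like object at $t=0$ in the original time) and uses the a priori noncollapsing/curvature bounds from Bamler's theory to keep all constants depending only on $\XX^\infty$ and $A$. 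Once this is in place, the exponent $\beta$ in the conclusion is simply inherited from \cite{CMZ21c}, possibly after shrinking it by a fixed factor to absorb the Wasserstein-comparison losses.
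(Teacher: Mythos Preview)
Your overall strategy is right and matches the paper's, but there is a genuine gap in Step~2. You claim the pushforward of the referential conjugate heat flow $\mu_{\lambda^{-2}t}$ is $W_1$-close to $\mu^\infty_t$ by ``standard parabolic estimates comparing conjugate heat kernels of uniformly $C^k$-close flows on a fixed compact interval bounded away from the singular time.'' This does not work: the referential measure $\mu_t$ is the (singular) conjugate heat kernel based at the singular time $t=0$, not at a point in $[-A,-1/A]$. Its value on $[-A,-1/A]$ encodes the geometry on the entire interval $[\lambda^{-2}t,0)$, so $C^k$-closeness of the metrics on $[-A,-1/A]$ alone gives no control whatsoever on $\mu_{\lambda^{-2}t}$ there. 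Your ``Main obstacle'' paragraph senses this but frames it as a perturbation problem near the endpoint, which it is not.

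What the paper actually does for this step (Section~4) is an independent argument: writing the density of the referential flow in the modified-Ricci-flow gauge as $v_s=e^{f_{\bg_s}-\bf_s}$, one derives the drifted heat equation $\partial_s v_s=\Delta^{f_{\bg_s}}_{\bg_s}v_s+\EE_s v_s$ with error $\EE_s=(DP_{\bg_s}-\tfrac12\tr_{\bg_s})(\partial_s\bg_s)$. Bounding $\EE_s$ requires a separate implicit-function-theorem result (Theorem~2.2) showing the linearization $DP_g$ of the minimizer map $g\mapsto f_g$ has uniformly bounded operator norm near $g_o$. One then uses the spectral gap $\lambda_1(-\Delta^{f_{\bg_s}}_{\bg_s})>\tfrac12$ (a Poincar\'e inequality) and an ODE comparison to show $\|v_s-1\|_{L^2}\le Cs^{-\theta}$, and interpolation upgrades this to $C^0$. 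Only then does the correspondence argument you sketch (carried out in Section~3 as an almost-monotonicity of $W_1$, Proposition~3.1 and Corollary~3.2) finish the proof; that part of your outline is essentially correct, and indeed handles the \emph{other} conjugate heat kernels $\nu^i_{x,t\,|\,s}$ appearing in the $\mathbb{F}$-distance, since those are based at times inside $[-A,-1/A]$.
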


To summarize the structure of this article, recall that according to the definition of  $\mathbb{F}$-distance (c.f. Definition \ref{Def of F-distance} below), the proofs of Theorem \ref{Thm_main_1} and Theorem \ref{Thm_main_2} consist of two parts: one is to estimate the $1$-Wasserstein distance between corresponding conjugate heat kernels on $\XX^{0,\lambda}$ and on $\XX^\infty$, the other is to estimate the $1$-Wasserstein between the referential conjugate heat flows $\mu_{\lambda^{-2}t}$ and $\mu^\infty_t$; these two estimates are carried out in Section 3 and Section 4, respectively.  Finally, in Section 5, we conclude the proofs of the above theorems.

\section{Background and preparatory results}

In this section, we briefly summarize the main results in \cite{CMZ21c}, establish the current settings, and show some preliminary results for the proof of Theorem \ref{Thm_main_1} and Theorem \ref{Thm_main_2}. For more basic definitions and notations such as the conjugate heat kernel and the Nash entropy, etc., refer to \cite{CMZ21c}.
 
\subsection{Modified Ricci flow}

Under the assumptions of Theorem \ref{Thm_main_1} and Theorem \ref{Thm_main_2}, the tangent flow $\XX^\infty$ is induced by the canonical form of a shrinking gradient Ricci soliton $(M^n,g_o,f_o)$. We normalize the metric $g_o$ and the potential function $f_o$ in the way that
\begin{align*}
    \Ric_{g_o}+\nabla^2f_o&=\tfrac{1}{2}g_o,
    \\\nonumber
    \int(4\pi)^{-\frac{n}{2}}e^{-f_o}dg_o&=1.
\end{align*}
 
We have also followed the idea of \cite{SW15} to consider (backward) \textit{modified Ricci flow} instead. The modified Ricci flow is defined to be the gradient flow of  the functional
\begin{equation*}
    \mu_g:=\inf\left\{\int_M(|\nabla f|^2+R_g+f-n)(4\pi)^{-\frac{n}{2}} e^{-f}dg\,\left|\,\int_M(4\pi)^{-\frac{n}{2}} e^{-f}dg=1 \right\}\right.=\mu(g,1),
\end{equation*}
where $\mu(\cdot,\cdot)$ is Perelman's $\mu$-functional. Specifically, the modified Ricci flow and its backward version are respectively defined as
\begin{align*}
    \partial_t g_t&=\nabla\mu_{g_t}=-2\left(\Ric_{g_t}+\nabla^2 f_{g_t}-\tfrac{1}{2}g_t\right),
    \\
    \partial_t g_t&=-\nabla\mu_{g_t}=2\left(\Ric_{g_t}+\nabla^2 f_{g_t}-\tfrac{1}{2}g_t\right).
\end{align*}
Here (and henceforth) we use $f_g$ to represent the minimizer of $\mu_g$, should it exist and be unique.

In fact, if the minimizer of $\mu_g$ is unique and depends smoothly on $g$, then one may easily convert a Ricci flow into a modified Ricci flow or a backward modified Ricci flow. More precisely, let $(M,g_t)_{t\in I}$, where $M$ is a closed manifold and $I\subset (-\infty,0]$, be a Ricci flow, then $\bg_s$ defined as
\begin{align}\label{modified_RF}
    &\left\{\begin{array}{rl}
    \tg_s &=\ e^{s}g_{-e^{-s}},  \\
     \partial_s\psi_s &=\ -\nabla_{\tg_s}f_{\tg_s}\circ  \psi_s,\\
     \bg_s&=\ \psi_s^*\tg_s,
\end{array}\right.
\end{align}
is a modified Ricci flow, and $\bg_s$ defined as 
\begin{align}\label{backward_modified_RF}
    &\left\{\begin{array}{rl}
    \tg_s &=\ e^{-s}g_{-e^{s}}, \\
     \partial_s\psi_s &=\ \nabla_{\tg_s}f_{\tg_s}\circ  \psi_s,\\
     \bg_s&=\ \psi_s^*\tg_s, 
\end{array}\right.
\end{align}
is a backward modified Ricci flow.

\subsection{\L ojaciewicz argument}

Although the flows in \eqref{modified_RF} and \eqref{backward_modified_RF} are not necessarily well-defined, it is proved in \cite{CMZ21c} that, for the Ricci flow $g_t$ considered in Theorem \ref{Thm_main_1} or Theorem \ref{Thm_main_2}, its corresponding backward modified Ricci flow or modified Ricci flow is well defined, and converges to $g_o$. This is due to the \L ojaciewicz argument of Sun-Wang \cite{SW15} which we shall briefly summarize in this subsection.

\subsubsection{\L ojaciewicz inequality}

Let us recall the \L ojaciewicz inequality in \cite{SW15}. Letting $\R(M)$ be the space of Riemannian metrics on $M$, $k\,\in \mathbb{N}$, $\gamma\in (0,1)$, the $C^{k,\gamma}$ neighborhood of $g_o$ is defined to be
\begin{equation*}
    \mathcal{V}^{k,\gamma}_{\delta}:=\left\{g\in \R(M)\,\Big|\, \|g-g_o\|_{C^{k,\gamma}_{g_o}}\leq \delta\right\}.
\end{equation*}
Sung-Wang \cite{SW15} proved the following \L ojaciewicz inequality:
\begin{Theorem}[Lemma 3.1 in \cite{SW15}]\label{Lojaciewicz}
There is a $C^{k,\gamma}$ ($k\gg 1$) neighborhood $\mathcal U$ of $g_o$, called a \emph{regular neighborhood}, such that for all $g\in\mathcal{U}$, there is a unique $f_g$, and the map $P:f\mapsto f_g$ is analytic on $\mathcal U$. Furthermore, there are constants $C>0$ and $\alpha\in[\frac{1}{2},1)$, such that for any $g\in\mathcal{U}$, we have
$$\|\nabla \mu_g\|_{L^2_g}\geq C|\mu_{g_o}-\mu_g|^\alpha,$$ where $\|\,\cdot\,\|_{L^2_g}$ is the $L^2$ norm taken with respect to $g$. 
\end{Theorem}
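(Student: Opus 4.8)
\medskip\noindent\textbf{Proof plan.} This is the \L ojasiewicz--Simon gradient inequality applied to Perelman's $\mu$-functional at the soliton metric $g_o$, and I would organize the argument in two stages. \emph{Stage 1 (eliminating $f$).} For each $g$, the minimizer $f_g$ of $\W(g,\cdot,1)$ under the constraint $\int(4\pi)^{-n/2}e^{-f}\,dg=1$ solves the Euler--Lagrange equation $2\Delta_g f-|\nabla f|_g^2+R_g+f-n=\mu_g$, and at the soliton $f_{g_o}=f_o$. I would first check that the linearization of this (constrained) equation in the $f$-variable at $(g_o,f_o)$ --- modulo the Lagrange-multiplier term, the operator $\phi\mapsto 2\Delta_{g_o}\phi-2\langle\nabla f_o,\nabla\phi\rangle_{g_o}+\phi$ on the constraint tangent space --- is an isomorphism of the relevant Hölder spaces; this is exactly the statement that $f_o$ is a non-degenerate constrained minimizer of $\W(g_o,\cdot,1)$, which is known for shrinking solitons. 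Since every term in the equation depends analytically on $(g,f)$ in Hölder norms, the analytic implicit function theorem yields a $C^{k,\gamma}$ neighborhood $\mathcal U$ of $g_o$ on which $f_g$ exists, is unique, and $P\colon g\mapsto f_g$ is real-analytic --- the first assertion of the theorem. Composing, $g\mapsto\mu_g=\W(g,f_g,1)$ is real-analytic on $\mathcal U$, and Perelman's first variation formula gives $\nabla\mu_g=-2(\Ric_g+\nabla^2 f_g-\tfrac12 g)$, whose zeros in $\mathcal U$ are precisely the normalized shrinking gradient Ricci solitons.

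\emph{Stage 2 (the gradient inequality).} The linearized gradient $L:=D(\nabla\mu)\big|_{g_o}$ on symmetric $2$-tensors is, in a DeTurck-type ($f_o$-divergence-free) gauge, a second-order formally self-adjoint Laplace-type elliptic operator --- essentially the second variation of $\mu$ at $g_o$, the linear-stability operator of Perelman and Cao--Hamilton--Ilmanen --- hence Fredholm on the closed manifold $M$. Diffeomorphism invariance of $\mu$ forces $\mathcal L_X g_o\in\ker L$, so I would first remove this degeneracy by restricting to an Ebin--Palais slice $\mathcal S$ through $g_o$ transverse to the diffeomorphism orbit (equivalently, adjoining the gauge term to $\nabla\mu$ as in \cite{SW15}); on $\mathcal S$ the operator $L$ remains elliptic and self-adjoint, hence Fredholm with finite-dimensional kernel $K$ and an $L^2$-orthogonal splitting of $\mathcal S$ on which $L$ is invertible. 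Given the real-analyticity of $\mu$ and this Fredholm structure, the abstract \L ojasiewicz--Simon theorem applies: after possibly shrinking $\mathcal U$, there are $C>0$ and $\alpha\in[\tfrac12,1)$ with $\|\nabla\mu_g\|_{L^2_g}\ge C|\mu_{g_o}-\mu_g|^\alpha$. The mechanism is a Lyapunov--Schmidt reduction: ellipticity and invertibility of $L$ on $K^\perp$ bound the $K^\perp$-component of $\nabla\mu_g$ below by that of $g-g_o$, while on the finite-dimensional $K$ the restricted analytic function obeys the classical \L ojasiewicz inequality; combining the two gives the claim, with $\alpha=\tfrac12$ in the integrable case.

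\emph{Main obstacle.} The real work is the functional-analytic bookkeeping, not any single estimate: one must genuinely establish that $g\mapsto\mu_g$ (hence $g\mapsto\nabla\mu_g$) is real-analytic between the chosen Banach spaces of Hölder sections --- which rests on the analytic implicit-function step for $f_g$ together with analyticity of $g\mapsto(\Ric_g,R_g)$ --- and one must arrange the gauge so that the \emph{only} degeneracy of $L$ is the finite-dimensional genuine kernel, so that the abstract theorem is applicable. A secondary point to handle with care is not to assume that $g_o$ is integrable: the general \L ojasiewicz--Simon scheme, rather than a naive spectral-gap argument, is exactly what yields the full admissible range $\alpha\in[\tfrac12,1)$ in the non-integrable case.
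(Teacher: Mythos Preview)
The paper does not prove this theorem at all: it is stated as ``Lemma 3.1 in \cite{SW15}'' and simply cited as an input, with no argument given. So there is no ``paper's own proof'' to compare against.

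That said, your outline is the correct strategy and matches what Sun--Wang actually do in \cite{SW15}: the analytic implicit function theorem to produce the map $P:g\mapsto f_g$ and hence real-analyticity of $g\mapsto\mu_g$, followed by the abstract \L ojasiewicz--Simon machinery (Fredholm second variation, slice/gauge to kill the diffeomorphism degeneracy, Lyapunov--Schmidt reduction to a finite-dimensional analytic problem). One small correction: in Stage~1 you should check the invertibility of the linearization not of the Euler--Lagrange equation itself but of the fourth-order operator $L(g,f)$ that the paper writes down just before Theorem~\ref{derivative bdd of P}; this is exactly what \cite{SW15} do, and the paper recalls that $DL_{(g_o,f_o)}(0,\cdot)$ is an isomorphism $C^{k,\gamma}\to C^{k-4,\gamma}$. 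Otherwise your identification of the main obstacle (genuine analyticity in H\"older spaces and correct handling of the gauge so that only a finite-dimensional kernel remains) is accurate.
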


\subsubsection{Rate of convergence}

We recall some basic computations related to the \L ojaciewicz arguments, which are very useful in the present article. Assume that $\bg_s$, where $s\in[0,\infty)$, is an \emph{either forward or backward} modified Ricci flow on a closed manifold $M^n$, which converges smoothly to a shrinker metric $g_o$. We also assume that $\bg_s$ is in a small neighborhood of $g_o$ for all $s\ge 0$. By \cite[Corollary 1.3, Corollary 1.4]{CMZ21c}, the Ricci flow in Theorem \ref{Thm_main_1} or Theorem \ref{Thm_main_2} can be converted to such a backward or forward modified Ricci flow through \eqref{backward_modified_RF} or \eqref{modified_RF}, respectively. 

Since $\bg_s$ is always in a neighborhood of $g_o$, it is always regular, and one may estimate the curvature of $\bg_s$ in terms of the curvature of $g_o$. Then, the standard Shi's estimates and the regularity of the metric imply that
$$\left|\nabla^l\Rm_{\bg_s}\right|_{\bg_s}\leq C(l),\quad \left\|\Ric_{\bg_s}+\nabla_{\bg_s}^2f_{\bg_s}-\tfrac{1}{2}\bg_s\right\|_{C^{l,\gamma}_{\bg_s}}\leq C(l),$$
for all $s\in[0,\infty)$ and $l\in\mathbb{N}$; see, for instance, the proof of \cite[Lemma 3.2]{SW15}. Let $\alpha\in[\frac{1}{2},1)$ be the constant in the statement of Theorem \ref{Lojaciewicz} and fix $\beta\in (2-1/\alpha,1)$. Now we consider the forward and backward cases separately.
\\

\noindent\textbf{(1) Backward case.} The argument below is from the appendix of \cite{CMZ21c}, which is modeled after \cite{SW15}. By the standard embedding and interpolation inequality, for any integer $l$, there are integers $p\ge l$ and $N(p)\gg p$, such that
\begin{eqnarray*}
     \left\|
    \partial_s \bg_s
    \right\|_{C^{l,\gamma}_{\bg_s}}
    \le C(l)\left\|
    \partial_s \bg_s
    \right\|_{W^{p,2}_{\bg_s}}
    &\le& C(l,p) \left\|
    \partial_s \bg_s
    \right\|_{L^{2}_{\bg_s}}^{\beta}\left\|\Ric_{\bg_s}+\nabla_{\bg_s}^2f_{\bg_s}-\tfrac{1}{2}\bg_s\right\|^{1-\beta}_{W^{N(p), 2}_{\bg_s}}
    \\
    &\leq &C(l)\left\|
    \partial_s \bg_s
    \right\|_{L^{2}_{\bg_s}}^{\beta},
\end{eqnarray*}
for all $s\geq 0$. Therefore, by the definition of backward modified Ricci flow and Theorem \ref{Lojaciewicz}, we have
\begin{align*}
    \frac{d}{ds}  \left( \mu_{\bg_s} - \mu_{g_o}\right)^{1-(2-\beta)\alpha}
    &= -2 (1-(2-\beta)\alpha) \left( \mu_{\bg_s} - \mu_{g_o}\right)^{-(2-\beta)\alpha} \|\nabla \mu_{\bg_s}\|^2_{L^2_{\bg_s}}\\
    & \le - C(\beta) \left\|
    \partial_s \bg_s
    \right\|_{L^{2}_{\bg_s}}^{\beta}.
\end{align*}
Then
\begin{equation}\label{十十}
   \left\|
    \partial_s \bg_s
    \right\|_{C^{l,\gamma}_{g_o}}\le C(l) \left\|
    \partial_s \bg_s
    \right\|_{C^{l,\gamma}_{\bg_s}}
    \le -C(\beta,l)  \frac{d}{ds}  \left( \mu_{\bg_s} - \mu_{g_o}\right)^{1-(2-\beta)\alpha}.
\end{equation}
Integrating the above inequality from $s$ to $\infty$, we have 
\begin{align}\label{bg covergence rate}
    \|\bg_s-g_o\|_{C^{l,\gamma}_{g_o}}\leq C(\beta,l)\left( \mu_{\bg_s} - \mu_{g_o}\right)^{1-(2-\beta)\alpha}.
\end{align}

On the other hand, we compute using Theorem \ref{Lojaciewicz} again
\begin{eqnarray*}
\frac{d}{ds}\left(\mu_{\bg_s}-\mu_{g_o}\right)^{1-2\alpha}&=&-2(1-2\alpha)\left(\mu_{\bg_s}-\mu_{g_o}\right)^{-2\alpha}\left\|\nabla\mu_{\bg_s}\right\|^2_{L^2_{\bg_s}}
\\
&\geq& 2(2\alpha-1)\left(\mu_{\bg_s}-\mu_{g_o}\right)^{-2\alpha}\cdot C\left(\mu_{\bg_s}-\mu_{g_o}\right)^{2\alpha}
\\
&\geq& 2C(2\alpha-1).
\end{eqnarray*}
Integrating the above inequality, we have
\begin{equation}\label{mu decay}
\mu_{\bg_s}-\mu_{g_o}\leq C(\alpha)s^{-\frac{1}{2\alpha-1}}\quad\text{ for all }\quad s>0.
\end{equation}
Combining \eqref{bg covergence rate} and \eqref{mu decay}, we have 
\begin{align}\label{backwardconvergencerate}
    \|\bg_s-g_o\|_{C^{l,\gamma}_{g_o}}\leq C(\beta,l) s^{-\frac{1-(2-\beta)\alpha}{2\alpha-1}}\quad\text{ for all }\quad s\in(0,\infty).
\end{align}

\noindent\textbf{(2) Forward case.} The forward case is similar to the backward case; more detailed computations can be found in \cite{SW15}. Note that in this case we consider $\mu_{g_o}-\mu_{\bg_s}$ instead, since $\mu_{\bg_s}\leq\mu_{g_o}$ for all $s\in[0,\infty)$. By the same computation as before, we have
\begin{equation*}
   \left\|
    \partial_s \bg_s
    \right\|_{C^{l,\gamma}_{g_o}}\le C(l) \left\|
    \partial_s \bg_s
    \right\|_{C^{l,\gamma}_{\bg_s}}
    \le -C(\beta,l)  \frac{d}{ds}  \left( \mu_{g_o}-\mu_{\bg_s} \right)^{1-(2-\beta)\alpha}
\end{equation*}
and 
\begin{align}\label{forwardconvergencerate}
    \|\bg_s-g_o\|_{C^{l,\gamma}_{g_o}}\leq C(\beta,l) s^{-\frac{1-(2-\beta)\alpha}{2\alpha-1}}\quad\text{ for all }\quad s\in(0,\infty),
\end{align}
where $l$, $\gamma$, and $\beta$ are the same as before.

\subsubsection{The derivative of the \texorpdfstring{$P$}{P} operator}

Recall the map $P$ defined in Theorem \ref{Lojaciewicz} is analytic in a regular neighborhood of a shrinker metric, and hence the minimizer $f_g$ of $\mu_g$ depends on $g$ smoothly, if $g$ is close to a shrinker metric. In the proof of the main theorem of the article, we also need an estimate on the derivative of $P$. In this subsection, by applying the implicit function theorem, we show that $DP$ is a bounded operator in a small neighborhood of a shrinker metric.

Recall that in the proof of  \cite[Lemma 2.2]{SW15}, Sun-Wang consider the operator $L:C^{k,\gamma}(\R(M))\times C^{k,\gamma}(M;\mathbb{R})$ $\longrightarrow C^{k-4,\gamma}(M;\mathbb{R})$ defined by
\begin{equation*}
    L(g,f):=2\Delta^f_g\left(2\Delta_g f+f-|\nabla f|^2+R_g\right)+(4\pi)^{-\frac{n}{2}}\int_{M}e^{-f}dv_g-1,
\end{equation*}
where $k\gg 1$, $\gamma\in(0,1)$, and $C^{k,\gamma}(\R(M))$ stand for all the $C^{k,\gamma}$ Riemannian metrics on $M$ and $\Delta^f_g$ is the weighted laplacian $\Delta_g-\nabla_g f\cdot \nabla_g$. Direct computation shows that the linearization of $L$ in the second component is given by 
\begin{align*}
    DL_{(g,f)}(0,h) = &\ 2\Delta^f_g\left(2\Delta_g^f h+h\right)-(4\pi)^{-\frac{n}{2}}\int_{M} h e^{-f}dv_g
    \\
    &-2\left\langle\nabla_g h, \nabla_g\left(2\Delta_g f+f-|\nabla f|^2+R_g\right)\right\rangle.
\end{align*}
If $(g_o,f_o)$ is a normalized shrinking soliton on $M$, then $L(g_o,f_o)=0$ and the above expression can be simplified to
\begin{equation*}
DL_{(g_o,f_o)}(0,h)=2\Delta^{f_o}_{g_o}\left(2\Delta^{f_o}_{g_o} h+h\right)-(4\pi)^{-\frac{n}{2}}\int_{M} h e^{-f_o}\,dv_{g_o},
\end{equation*}
which is a linear isomorphism from $C^{k,\gamma}(M;\mathbb{R})$ to $C^{k-4,\gamma}(M;\mathbb{R})$ (\cite[Lemma 3.5]{SW15}). Therefore, by the real analytic version of the implicit function theorem (see the proof of \cite[Lemma 2.2]{SW15}), there exists a $C^{k,\gamma}$-neighborhood $\mathcal{U}$ of $g_o$ in $\R(M)$, and a real analytic mapping $P:\mathcal U\to C^{k,\gamma}(M;\mathbb{R})$, such that
\begin{equation*}
    L(g,P(g))=0,\quad\text{ for all }\quad g\in \mathcal{U}.
\end{equation*}
Furthermore,
\begin{equation}\label{def for Q}
    DL_{(g,P(g))}(0,h)=2\Delta^{P(g)}_{g}\left(2\Delta^{P(g)}_{g} h+h\right)-(4\pi)^{-\frac{n}{2}}\int_{M} h e^{-P(g)}\,dv_{g},\quad\text{ for all }\quad g\in \mathcal{U}.
\end{equation}

We shall then estimate the upper bound of 
$DP_g$
for all $g$ close to $g_o$ in the $C^{k,\gamma}$ sense.

\begin{Theorem}\label{derivative bdd of P}
There is a $C^{k,\gamma}$ ($k\gg 1$) neighborhood $\mathcal{V}\subset \mathcal U$ of $g_o$, such that the linearization of $P$
\[
D P_g:\left(C^{k,\gamma}(\mathcal{S}(M)), \|\cdot\|_{C^{k,\gamma}_{g_o}}\right)\longrightarrow \left(C^{k,\gamma}(M;\mathbb{R}), \|\cdot\|_{C^{k,\gamma}_{g_o}}\right)
\]
has uniformly bounded operator norm for any $g\,\in \mathcal{V}$. Here $C^{k,\gamma}(\mathcal{S}(M))$ stands for the space of all $C^{k,\gamma}$ symmetric $2$-tensors on $M$, and $\mathcal U$ is the regular neighborhood of $g_o$ defined in the statement of Theorem \ref{Lojaciewicz}.
\end{Theorem}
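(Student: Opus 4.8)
The plan is to differentiate the defining identity $L(g, P(g)) = 0$ with respect to $g$ and then invert the linearization in the second slot, showing that the inverse operator can be bounded uniformly on a small enough neighborhood $\mathcal{V}$ of $g_o$. Concretely, write $f = P(g)$; differentiating $L(g, P(g)) = 0$ in the direction of a symmetric $2$-tensor $h$ gives
\[
D_g L_{(g, P(g))}(h, 0) + D_f L_{(g, P(g))}\bigl(0, DP_g(h)\bigr) = 0,
\]
so that $DP_g(h) = -\bigl(D_f L_{(g, P(g))}(0, \cdot)\bigr)^{-1}\bigl(D_g L_{(g, P(g))}(h, 0)\bigr)$. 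The task therefore splits into two estimates: (i) a uniform bound on the operator norm of $D_g L_{(g,P(g))}(\cdot, 0): C^{k,\gamma}(\mathcal{S}(M)) \to C^{k-4,\gamma}(M; \mathbb{R})$, and (ii) a uniform bound on the operator norm of the inverse $\bigl(D_f L_{(g,P(g))}(0,\cdot)\bigr)^{-1}: C^{k-4,\gamma}(M;\mathbb{R}) \to C^{k,\gamma}(M;\mathbb{R})$. Composing gives the claimed bound on $DP_g$, after noting that $P$ actually lands in $C^{k,\gamma}$ (not merely $C^{k-4,\gamma}$) by elliptic regularity, which is already part of the implicit-function-theorem setup recalled above.

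For step (i), I would observe that $D_g L_{(g,f)}(h,0)$ is a universal expression built algebraically and differentially out of $g$, $g^{-1}$, $f$, their covariant derivatives up to a bounded order, and $h$ and its derivatives up to order $4$ (the operator $L$ has order $4$ in $f$ and the $g$-dependence enters through $\Delta_g$, $R_g$, $dv_g$, $\nabla_g$, and $\Delta^f_g$). Since $g \in \mathcal{V}$ is $C^{k,\gamma}$-close to $g_o$ and $f = P(g)$ is continuous (indeed real analytic) in $g$, hence uniformly bounded in $C^{k,\gamma}$ on $\mathcal{V}$ once $\mathcal{V}$ is small, all these coefficient tensors are uniformly bounded in $C^{k-4,\gamma}_{g_o}$; the $C^{k,\gamma}_{g_o}$ norms and the $C^{k,\gamma}_{g}$ norms are uniformly comparable. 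Hence $\|D_g L_{(g,P(g))}(h,0)\|_{C^{k-4,\gamma}_{g_o}} \le C \|h\|_{C^{k,\gamma}_{g_o}}$ with $C$ independent of $g \in \mathcal{V}$.

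Step (ii) is the main obstacle and the crux of the argument. At $g = g_o$ the operator $D_f L_{(g_o,f_o)}(0,\cdot) = 2\Delta^{f_o}_{g_o}\bigl(2\Delta^{f_o}_{g_o} + \mathrm{id}\bigr) - (4\pi)^{-n/2}\int_M (\cdot)\, e^{-f_o}\, dv_{g_o}$ is a linear isomorphism $C^{k,\gamma}(M;\mathbb{R}) \to C^{k-4,\gamma}(M;\mathbb{R})$ by \cite[Lemma 3.5]{SW15}, so its inverse has some finite operator norm $\Lambda_0$. The operator $T_g := D_f L_{(g,P(g))}(0,\cdot)$ given by \eqref{def for Q} depends continuously (in operator norm $C^{k,\gamma} \to C^{k-4,\gamma}$, using again that its coefficients are $C^{k-4,\gamma}_{g_o}$-continuous functions of $g$ and $P(g)$) on $g$ near $g_o$; so for $\mathcal{V}$ small we have $\|T_g - T_{g_o}\| \le \tfrac{1}{2\Lambda_0}$, whence $T_g = T_{g_o}(\mathrm{id} + T_{g_o}^{-1}(T_g - T_{g_o}))$ is invertible by a Neumann series and $\|T_g^{-1}\| \le 2\Lambda_0$ uniformly. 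The only subtlety to be careful about: one must ensure $T_g$ remains \emph{surjective} (not just injective with small perturbation), which the Neumann-series argument handles automatically since it produces a genuine two-sided inverse as a bounded operator between the correct Hölder spaces; and one must confirm that $P(g) \in C^{k,\gamma}$ so that $\Delta^{P(g)}_g$ really maps $C^{k,\gamma} \to C^{k-4,\gamma}$ — this is exactly the regularity already built into the implicit function theorem statement recalled before the theorem. Combining (i) and (ii), $\|DP_g(h)\|_{C^{k,\gamma}_{g_o}} = \|T_g^{-1} D_g L_{(g,P(g))}(h,0)\|_{C^{k,\gamma}_{g_o}} \le 2\Lambda_0 \cdot C \|h\|_{C^{k,\gamma}_{g_o}}$ for all $g \in \mathcal{V}$, which is the assertion.
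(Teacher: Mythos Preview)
Your proposal is correct and follows essentially the same route as the paper: differentiate $L(g,P(g))=0$ to express $DP_g$ via the implicit function theorem, bound $D_gL_{(g,P(g))}(\cdot,0)$ using the explicit structure of $L$ together with the uniform $C^{k,\gamma}$ control on $P(g)$, and then bound $(D_fL_{(g,P(g))}(0,\cdot))^{-1}$ by a Neumann-series perturbation of the isomorphism at $(g_o,f_o)$. The paper carries out exactly this argument, writing $Q(g)=DL_{(g,P(g))}(0,\cdot)$ and factoring $Q(g)=Q(g_o)\circ[I+Q(g_o)^{-1}(Q(g)-Q(g_o))]$ to obtain $\|Q(g)^{-1}\|\le 2\|Q(g_o)^{-1}\|$ on a small neighborhood.
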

\begin{proof}

Let us compute $DL_{(g,f)}(w,0)$. If $g_s$ is a variation of $g$ with $\frac{\partial }{\partial s}g_s\big|_{s=0}=w\in C^{k,\gamma}(\mathcal S(M))$, then, by a straightforward computation, we have
$$\left.\frac{\partial }{\partial s}\right|_{s=0}\Delta_g^f=-\left\langle w,\nabla^2_g\right\rangle_g-\left\langle \left( \div_g w-\tfrac{1}{2}\nabla_g \tr_g w\right), \nabla_g\right\rangle_g+w(\nabla_g f, \nabla_g).$$
Similarly, we have
\begin{eqnarray*}
&&\left.\frac{\partial }{\partial s}\right|_{s=0}\left(2\Delta_g f+f-|\nabla_g f|^2_g+R_g\right)\\
&=&-2\left\langle w,\nabla^2_g f\right\rangle_g-\big\langle \left( 2\div_g w-\nabla_g \tr_g w\right), \nabla_g f\big\rangle_g+w(\nabla_g f, \nabla_g f)\\
&&-\Delta_g \tr_g w+ \div_g \div_g w-\langle w,\Ric_g\rangle_g.
\end{eqnarray*}
Hence if we let $u:=2\Delta_g f+f-|\nabla_g f|^2_g+R_g$, then 
\begin{equation}
\label{first comp linearization}
\begin{split}
    DL_{(g,f)}(w,0)&=-2\left\langle w,\nabla^2_g u\right\rangle_g-\left\langle \left( 2\div_g w-\nabla_g \tr_g w\right), \nabla_g u\right\rangle_g+2w(\nabla_g f, \nabla_g u)\\
    &\,+2\Delta_g^f\Big(-2\left\langle w,\nabla^2_g f\right\rangle_g-\left\langle \left( 2\div_g w-\nabla_g \tr_g w\right), \nabla_g f\right\rangle_g+w(\nabla_g f, \nabla_g f)\\
&\, -\Delta_g \tr_g w+ \div_g \div_g w-\langle w,\Ric_g\rangle_g\Big)+(4\pi)^{-\frac{n}{2}}\int_{M}\tfrac{1}{2}\tr_g w\, e^{-f}\,dv_g.
\end{split}
\end{equation}
Now, by the implicit function theorem we have
\begin{equation}\label{IFT}
    DP_g(w)=-\left(DL_{(g, P(g))}(0,\cdot)\right)^{-1}\circ DL_{(g,P(g))}(w,0).
\end{equation}
Since $P:\mathcal U\to C^{k,\gamma}(M;\mathbb{R})$ is analytic, we can find a $C^{k,\gamma}$ neighborhood $\mathcal V$ of $g_o$ and a constant $C$ depending on $\mathcal V\subset\mathcal U$ but independent of $g\in \mathcal{V}$ such that, by \eqref{first comp linearization}, 
\begin{equation}\label{fallacium}
     \|DL_{(g,P(g))}(w,0)\|_{C^{k-4,\gamma}_{g_o}}\leq C\|w\|_{C^{k,\gamma}_{g_o}}\quad\text{ for all }\quad w\in C^{k,\gamma}(\mathcal S(M)).
\end{equation}
Note that here we have used the fact that if $f=P(g)$, then $\|f\|_{C^{k,\gamma}_{g_o}}$ is bounded independent of $g\in\mathcal V$.\\

\noindent\textbf{Claim:}
There exists a $C^{k,\gamma}$ neighborhood of $\mathcal V'\subseteq \mathcal V$ of $g_o$ such that $DL_{(g,P(g))}(0,\cdot)$ is also a linear isomorphism from $C^{k,\gamma}(M;\mathbb{R})$ to $C^{k-4,\gamma}(M;\mathbb{R})$, and
\begin{equation*}
  \left \|\left(DL_{(g, P(g))}(0,\cdot)\right)^{-1} \right\|\leq  2\left\|\left(DL_{(g_o, f_o)}(0,\cdot)\right)^{-1} \right\|\quad \text{  for all   }\quad g\in \mathcal{V'}.
\end{equation*}
\begin{proof}[Proof of the claim]
Let $\mathcal B_1$ be the set of all bounded linear operators from $C^{k,\gamma}(M;\mathbb{R})$ to $C^{k-4,\gamma}(M;\mathbb{R})$ endowed with the operator norm, i.e. 
\[
\|A\|_{\mathcal B_1}=\sup_{u\neq 0}\frac{\|A(u)\|_{C^{k-4,\gamma}_{g_o}}}{\|u\|_{C^{k,\gamma}_{g_o}}}.
\]

With the induced topology by this norm, $\mathcal B_1$ is a Banach space. Similarly, we denote by $\mathcal B_2$ and $\mathcal B_3$, the Banach spaces of all bounded linear operators from $C^{k-4,\gamma}(M;\mathbb{R})$ to $C^{k,\gamma}(M;\mathbb{R})$, and of all bounded linear operators from $C^{k,\gamma}(M;\mathbb{R})$ to $C^{k,\gamma}(M;\mathbb{R})$, respectively, with the corresponding operator norms. We consider the map $Q:\mathcal V\longrightarrow \mathcal B_1$ given by 
\[
Q(g):= DL_{(g,P(g))}(0,\cdot).
\]
Since, $P:\mathcal U\to C^{k,\gamma}(M)$ is continuous (c.f. \cite[Theorem 10.1]{J05}), it can be seen from \eqref{def for Q} that $Q$ is also continuous. For any $g\in \mathcal V$, we have
\begin{equation}\label{Q eqn}
    \begin{split}
        Q(g)&= Q(g_o)+Q(g)-Q(g_o)\\
        &=Q(g_o)\circ\left[I+Q(g_o)^{-1}\circ\left(Q(g)-Q(g_o)\right)\right],
    \end{split}
\end{equation}
where $I$ is the identity map
on $C^{k,\gamma}(M;\mathbb{R})$.
As $Q(g_o)$ is a linear isomorphism, we have that $(Q(g))^{-1} \in\mathcal B_2$ if and only if $I+Q(g_o)^{-1}\circ\left(Q(g)-Q(g_o)\right)$ is an invertible bounded linear map from $C^{k,\gamma}(M;\mathbb{R})$ to $C^{k,\gamma}(M;\mathbb{R})$. It suffices to check that for $g$ sufficiently close to $g_o$, $\|E(g)\|_{\mathcal B_3}<1$, where $E(g):=Q(g_o)^{-1}\circ\left(Q(g)-Q(g_o)\right)$, since in this case \cite[Lemma 7.22]{J05}, we know that $I+E(g)$ is invertible and
\begin{equation}\label{norm of E}
\left(I+E(g)\right)^{-1}=\sum_{k=0}^{\infty} (-1)^k\left(E(g)\right)^{k} \text{  and  }  \left\|\left(I+E(g)\right)^{-1}\right\|_{\mathcal B_3}\le \frac{1}{1-\left\|E(g)\right\|_{\mathcal B_3}}.
\end{equation}
By the continuity of $Q$, there is a $C^{k,\gamma}$ neighborhood of $\mathcal V'\subseteq \mathcal V$ of $g_o$ such that
\[
\left\|E(g)\right\|_{\mathcal B_3}\le \left\|Q(g_o)^{-1}\right\|_{\mathcal B_2}\left\|Q(g)-Q(g_o)\right\|_{\mathcal B_1}<\tfrac{1}{2}\quad\text{ for all }\quad g\in\mathcal{V}'.
\]
Hence on $\mathcal V'$, $\left(I+E(g)\right)$ as well as $Q(g)$, are invertible. Moreover by \eqref{Q eqn} and \eqref{norm of E}, we have
\begin{equation*}
    \begin{split}
      \left \|\left(DL_{(g, P(g))}(0,\cdot)\right)^{-1} \right\|_{\mathcal B_2}&=  \left\|Q(g)^{-1}\right\|_{\mathcal B_2}
      \le \left\|\left(I+E(g)\right)^{-1}\right\|_{\mathcal B_3}\left\|Q(g_o)^{-1}\right\|_{\mathcal B_2}\\
      &\le 2\left\|Q(g_o)^{-1}\right\|_{\mathcal B_2}= 2\left\|\left(DL_{(g_o, f_o)}(0,\cdot)\right)^{-1} \right\|_{\mathcal B_2}.
    \end{split}
\end{equation*}
This completes the proof of the claim.
\end{proof}

Hence, by replacing $\mathcal V$ by $\mathcal V'$ if necessary, 
there exists a $C$ independent of $g\in\mathcal V$, such that for all $g\in\mathcal V$, we have
\begin{equation}\label{fallacitas}
    \left \|\left(DL_{(g, P(g))}(0,\cdot)\right)^{-1}(h) \right\|_{C^{k,\gamma}_{g_o}}\leq C\left\|h\right\|_{C^{k-4,\gamma}_{g_o}}\quad\text{ for all }\quad h\in\mathcal C^{k-4,\gamma}(M).
\end{equation}
Finally, by \eqref{IFT}, (\ref{fallacium}), and (\ref{fallacitas}), the theorem follows.

\end{proof}

\subsection{Definition of the
\texorpdfstring{$\mathbb{F}$}{F}-distance}

The $\mathbb{F}$-distance is a generalization of the Gromov-$W_1$-Wasserstein distance studied by Sturm \cite{St06a,St06b}.  To help the reader grasp the main ingredients of the proof, we include the definition of Bamler's $\mathbb{F}$-distance in this subsection. Note that Definition \ref{Def of F-distance} is slightly simpler than the original definition in \cite[Section 5]{Bam20b}, but it is more appropriate for application and is sufficient for our purpose.


The definition of $\mathbb{F}$-convergence is involved in the notions of coupling and $1$-Wasserstein distance between probability measures. Let $X$ and $Y$ be metric spaces. We denote by $\PP(X)$ the space of probability measures on $X.$
For any $\mu\in \PP(X)$ and $\nu\in \PP(Y),$ we denote by $\Pi(\mu,\nu)$ the space of \emph{couplings} between $\mu$ and $\nu$, namely, the set of all the probability measures $q\in \PP(X\times Y)$ satisfying
\[
    q(A\times Y)=\mu(A),\quad
    q(X\times B)=\nu(B),
\]
for any measurable subsets $A\subset X$ and $ B\subset Y$. The $1$-Wasserstein distance between $\mu,\nu\in \PP(X)$ is defined to be
\[
    \dist_{W_1}(\mu,\nu)
    := \inf_{q\in \Pi(\mu,\nu)} \int_{X\times X} \dist(x,y) \, dq(x,y).
\]
By the Kantorovich-Rubinstein Theorem, this definition is equivalent to
\[
    \dist_{W_1}(\mu,\nu)
    = \sup_{f}\left( \int f\, d\mu 
    - \int f\, d\nu\right),
\]
where the supremum is taken over all bounded $1$-Lipschitz functions $f$ on $X.$

Let $I$ be an interval and $(\XX^i,(\mu^i_t)),i=1,2,$ be metric flow pairs defined over $I$. 

\begin{Definition}\label{Def of F-distance}
Let $J\subset I.$
The $\mathbb{F}$-distance between $(\XX^1,(\mu^1_t))$ and $(\XX^2,(\mu^2_t))$ (uniform over $J$), denoted by
\[
    \dist_{\mathbb{F}}^J\left((\XX^1,(\mu^1_t)),
    (\XX^2,(\mu^2_t))\right),
\]
is defined to be the infimum of number $r>0$ with the following properties:
There are a measurable set $E\subset I$, couplings $q_t\in \Pi(\mu_t^1,\mu_t^2),$ and embeddings $\phi^i_t:\XX^i_t\to \left(Z_t, \dist^{Z_t}\right)$ for $i=1,2, t\in I\setminus E,$ where $\left(Z_t, \dist^{Z_t}\right)$ is a complete separable metric space,  such that $J\subset I\setminus E$, and
\begin{enumerate}
    \item $|E|\le r^2$;
    \item For any $s,t\in I\setminus E, s<t$, 
    \[
        \int_{\XX_t^1\times \XX_t^2} \dist^{Z_s}_{W_1}\left(\phi^1_{*s}\nu^1_{x_1|s},
        \phi^2_{*s}\nu^2_{x_2|s}\right)\, dq_t(x_1,x_2)
        \le  r.
    \]
\end{enumerate}
The tuple $\left(Z_t,\dist^{Z_t},\{\phi^i_t\}_{i=1,2}\right)_{t\in I\setminus E}$ is called a \emph{correspondence}.
\end{Definition}

\section{Almost monotonicity of the \texorpdfstring{$1$}{W1}-Wasserstein distance}

As indicated by Definition \ref{Def of F-distance}, an important ingredient in the estimate of the $\mathbb{F}$-distance is to compare the corresponding conjugate heat kernels in the metric flows. In this section, we consider the $1$-Wasserstein distance between two conjugate heat kernels on two different Ricci flows sufficiently close to each other. It turns out that an almost monotonicity property similar to \cite[Lemma 2.7]{Bam20a} is available. This almost monotonicity further implies that two Ricci flows close to each other in the $C^2$-sense are also close in the $\mathbb{F}$-sense, given that the referntial conjugate heat flows are close to each other.

Throughout this section, we shall let $M^n$ be a  closed $n$-dimensional manifold and $(M^n,g_{i,t})_{t\in I}$, $i=1,2$, be Ricci flows defined on the same time interval $I$. Let $\bg$ be a background metric on $M$. We make the assumption that for all $t\in I$ and $i=1,2$, it holds that
\begin{gather}\label{iniquitas}
C_0^{-1} \bg \le g_{i,t} \le C_0 \bg,
    \quad
    \operatorname{diam}(\bg) \le D,
    \quad
    \sup_{M\times I}|{\Rm}_{g_i}| \le \Lambda,
\end{gather}
for some positive constants $C_0$, $D$, and $\Lambda$. The background metric $\bg$ can be taken as, for instance, $g_{1,t_0}$ for some $t_0\in I$. Unless otherwise specified, the tensor norms in this section are always computed using  the metric $\bg$, and we shall suppress the metric specification in the norm notations, i.e., $\|\cdot\|_{C^2}=\|\cdot\|_{C^2_{\bg}}$, etc. 

Let us assume that 
\begin{eqnarray}\label{iniquitas_2}
 \sup_{t\in I} \left\|
        g_{1,t}-g_{2,t}
    \right\|_{C^2} \le \varepsilon \ll 1.
\end{eqnarray}
Then, following \cite{Bam20b}, we may easily construct a correspondence $\left(Z_t,\dist^{Z_t},\{\phi^i_t\}_{i=1,2}\right)_{t\in I}$ as follows. Let $Z_t=M_1\sqcup M_2$ for $t\in I$, where $M_1=M_2=M$ and, for notational convenience, we have added the subindex to indicate that $M_i$ corresponds to the underlying manifold of the Ricci flow $g_{i,t}$. For all $z_1\in M_1,z_2\in M_2,$ we define
\[
    \dist^{Z_t}(z_1,z_2)
    :=\dist^{Z_t}(z_2,z_1)
    := \inf_{w\in M}\left( \dist_{g_{1,t}}(z_1,w)
    + \dist_{g_{2,t}}(w,z_2)\right) + \varepsilon.
\]
Then it is straightforward to check that $\left(Z_t,\dist^{Z_t}\right)$ is a compact metric space.
Let $\phi^i_t:\left(M, \dist_{g_{i,t}}\right)\to \left(Z_t,\dist^{Z_t}\right)$ be the canonical embedding. Clearly, for any $x\in M$, we have
\begin{equation}\label{anothernonsenseotherthanthepreviousnonsense}
     \dist^{Z_t}(\phi^1_t(x),\phi^2_t(x))
    = \varepsilon.
\end{equation}
By a slight abuse of notations, we still write $M_i=\phi^i_t(M)\subset Z_t$, $i=1,2$.

\begin{Proposition}\label{quantitas W-dist}
Let $(M^n,g_{i,t})_{t\in I}$ be Ricci flows satisfying (\ref{iniquitas}) and (\ref{iniquitas_2}), where $M$ is a closed manifold. For all $x_1,x_2\in M$ and for all $s\le s'\le s_1,s_2,$ where $ s,s',s_1,s_2\in I,$ if
\[
    \mathcal{N}^2_{x_2,s_2}(s_2-s) \ge -Y\quad
    \text{ or }\quad
    \mathcal{N}^1_{x_1,s_1}(s_1-s) \ge -Y,
\]
then
\begin{align}\label{ineq: W1 almost mono}
     & \dist^{Z_{s}}_{{\rm W}_1}\left(
    \phi^1_{s*} \nu^1_{x_1,s_1\,|\,s},
    \phi^2_{s*} \nu^2_{x_2,s_2|s}
    \right)
    \\\nonumber
    \le\ &
    (1+C\varepsilon)\dist^{Z_{s'}}_{{\rm W}_1}\left(
    \phi^1_{s' *} \nu^1_{x_1,s_1\,|\,s'},
    \phi^2_{s' *} \nu^2_{x_2,s_2\,|\,s'}
    \right) + C \varepsilon (1+{s'-s}),
\end{align}
   where $\N^i$ and $\nu^i$ stand for the Nash entropy and the conjugate heat kernel on the Ricci flow $(M,g_{i,t})_{t\in I}$, respectively, and $C$ is a constant depending only on $C_0$, $D$, $Y$, $\Lambda$, and $n$.
In particular, setting $s_1=s_2=s'=t$, we have
\begin{eqnarray}
\dist^{Z_{s}}_{{\rm W}_1}\left(
    \phi^1_{s*} \nu^1_{x_1,t\,|\,s},
    \phi^2_{s*} \nu^2_{x_2,t\,|\,s}
    \right)
\le  (1+C\varepsilon)\dist^{Z_t}(\phi^1_t(x_1),\phi^2_t(x_2))
+C \varepsilon(1+{t-s}).
\end{eqnarray}
\end{Proposition}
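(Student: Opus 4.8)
The proposition is an almost-monotonicity statement for the $1$-Wasserstein distance between conjugate heat kernels living on two nearby Ricci flows, measured in the glued metric space $Z_t$. The natural strategy is to mimic Bamler's proof of \cite[Lemma 2.7]{Bam20a} (monotonicity of the Wasserstein distance along conjugate heat flow on a \emph{single} flow) and carry along the error terms produced by the $C^2$-closeness \eqref{iniquitas_2} of $g_{1,t}$ and $g_{2,t}$. First I would reduce the problem, via the Kantorovich--Rubinstein duality, to a statement about solutions of the (backward) conjugate heat equation: pick a bounded $1$-Lipschitz function $u_s$ on $(Z_s,\dist^{Z_s})$ that nearly realizes the Wasserstein distance on the left, restrict it to $M_1$ and to $M_2$ to get $1$-Lipschitz functions $u_{1,s}$ on $(M,g_{1,s})$ and $u_{2,s}$ on $(M,g_{2,s})$, and evolve each \emph{forward} in time by the heat equation $\partial_t u_i = \Delta_{g_{i,t}} u_i$ on its own flow, obtaining $u_{i,t}$ at time $s'$. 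The pairing $\int u_{i,t}\,d\nu^i_{x_i,s_i\,|\,t}$ is then constant in $t$ (this is the defining adjointness of the conjugate heat kernel), so the quantity on the left of \eqref{ineq: W1 almost mono} is, up to the $\varepsilon$ coming from \eqref{anothernonsenseotherthanthepreviousnonsense} when comparing the two restrictions at time $s$, equal to $\int u_{1,s'}\,d\nu^1_{x_1,s_1\,|\,s'} - \int u_{2,s'}\,d\nu^2_{x_2,s_2\,|\,s'}$ evaluated at time $s'$.

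The heart of the argument is to control two things as we pass from $s$ to $s'$. First, the Lipschitz constant of $u_{i,t}$ with respect to $g_{i,t}$: under Ricci flow, Bamler's gradient estimate (Bochner's formula applied to $|\nabla u_i|^2$ along the heat equation on a Ricci flow, which kills the Ricci term) gives that $|\nabla u_{i,t}|_{g_{i,t}}$ stays bounded, in fact $1$-Lipschitz is \emph{preserved} on a genuine Ricci flow. Second, and this is where $\varepsilon$ enters: at time $s'$ I need a single $1$-Lipschitz function on $(Z_{s'},\dist^{Z_{s'}})$ that simultaneously agrees with $u_{1,s'}$ on $M_1$ and with $u_{2,s'}$ on $M_2$, up to small error. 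Here I would estimate $\sup_M |u_{1,s'} - u_{2,s'}|$ by a Duhamel/Gr\"onwall argument: the two evolved functions start from restrictions of the same $Z_s$-Lipschitz function (so they differ by at most $\dist^{Z_s}$-distortion, i.e.\ $O(\varepsilon)$ pointwise after accounting for the $\varepsilon$ offset and the near-isometry of $\phi^1_s,\phi^2_s$), and the difference of their evolution equations is $\partial_t(u_1 - u_2) = \Delta_{g_{1,t}}(u_1-u_2) + (\Delta_{g_{1,t}} - \Delta_{g_{2,t}})u_2$, whose inhomogeneous term is $O(\varepsilon)$ in $C^0$ by \eqref{iniquitas_2} together with the uniform curvature and non-collapsing bounds \eqref{iniquitas} (which give uniform $C^2$ bounds on $u_{2,t}$ via parabolic Schauder/Bernstein estimates). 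Integrating from $s$ to $s'$ yields $\sup|u_{1,s'}-u_{2,s'}| \le C\varepsilon(1 + (s'-s))$, and using the heat-equation maximum principle one also gets that the Lipschitz constant degrades by at most a factor $(1+C\varepsilon)$ when the two metrics are only $\varepsilon$-close rather than equal. Combining: define on $Z_{s'}$ the common function $v = u_{1,s'}$ on $M_1$ and $v = u_{2,s'}$ on $M_2$; adjusting $v$ on $M_2$ by the constant-or-pointwise error $C\varepsilon(1+s'-s)$ and rescaling by $(1+C\varepsilon)^{-1}$ makes it a genuine $1$-Lipschitz function on $Z_{s'}$, so its pairing against $q_{s'}$-marginals is $\le \dist^{Z_{s'}}_{W_1}(\phi^1_{s'*}\nu^1,\phi^2_{s'*}\nu^2)$; unwinding the rescaling and the additive errors produces exactly \eqref{ineq: W1 almost mono}.

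The one place the Nash-entropy hypothesis $\mathcal N^i_{x_i,s_i}(s_i-s)\ge -Y$ is used is to make the above error terms \emph{integrable/uniform} against the conjugate heat kernel measure: the $C^2$-difference of the Laplacians is $O(\varepsilon)$ pointwise, but to pair it against $\nu^i_{x_i,s_i\,|\,t}$ and get a bound independent of how singular $\nu^i$ might look, one invokes the Gaussian-type concentration / $L^1$-bounds for conjugate heat kernels that follow from a lower Nash-entropy bound (Bamler's heat-kernel estimates), exactly as in \cite{Bam20a}; this also controls $\int \dist^{Z_{s'}}_{W_1}(\cdots)\,dq_{s'}$-type quantities and the diameter-weighted terms, giving the clean final constant depending only on $C_0, D, Y, \Lambda, n$. \textbf{Main obstacle.} I expect the genuinely delicate step to be the second one: obtaining uniform $C^1$ (and enough $C^2$) control of the evolved function $u_{2,t}$ — or rather of $u_{1,t}-u_{2,t}$ — on the right time interval, and showing the Lipschitz constant does not blow up, since the two heat operators act on \emph{different} manifolds and one cannot directly subtract the equations without first transplanting via $\phi^1,\phi^2$; making the bookkeeping of the $\varepsilon$-offset in $\dist^{Z_t}$ consistent with the maximum principle, and ensuring all constants are uniform over the (possibly long) interval $[s,s']$ with only the benign linear-in-$(s'-s)$ growth asserted, is where the care is needed. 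The ``In particular'' clause is then immediate: take $s_1 = s_2 = s' = t$, so that $\nu^i_{x_i,t\,|\,t} = \delta_{x_i}$, whence the right-hand side of \eqref{ineq: W1 almost mono} becomes $(1+C\varepsilon)\dist^{Z_t}(\phi^1_t(x_1),\phi^2_t(x_2)) + C\varepsilon(1+(t-s))$, and the Nash-entropy hypothesis holds trivially at $\tau = 0$ (or with an arbitrarily small shift) under the curvature and non-collapsing bounds \eqref{iniquitas}.
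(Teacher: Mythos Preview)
Your overall strategy---Kantorovich--Rubinstein duality at time $s$, evolve by the heat equation to time $s'$, and compare with a $1$-Lipschitz test function on $Z_{s'}$---is the same as the paper's. The paper, however, organizes the error estimate differently, and the difference matters.

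The paper evolves only \emph{one} function, $u_{1,t}$ (solving $\Box_{g_1}u_1=0$), and bounds the \emph{cross-pairing drift}
\[
\left.\int_M u_{1,t}\,d\nu^2_t\,\right|_{t=s}^{t=s'}
\]
directly; it never needs $u_{2,t}$, nor the pointwise difference $u_{1,t}-u_{2,t}$. Your plan to control $\sup_M|u_{1,s'}-u_{2,s'}|$ by Duhamel requires $\int_s^{s'}\sup_M|(\Delta_{g_1}-\Delta_{g_2})u_{2,t}|\,dt\lesssim\varepsilon\int_s^{s'}\bigl(|\nabla^2 u_{2,t}|+|\nabla u_{2,t}|\bigr)\,dt$, and here is the gap: the Hessian of a heat solution with merely Lipschitz initial data is \emph{not} uniformly bounded. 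The Bando--Bernstein--Shi estimate actually used in the paper (from \cite[Lemma 9.14]{Bam20b}, under only the curvature bound $|{\Rm}|\le\Lambda$) gives $|\nabla^2 u_{2,t}|+|\nabla u_{2,t}|\le C\tfrac{1+\sqrt{t-s}}{t-s}$, and $\int_s^{s'}\frac{dt}{t-s}$ diverges. A sharper $1/\sqrt{t-s}$ Hessian bound from Lipschitz data would save your argument, but proving it on a Ricci flow background requires $|\nabla\Rm|$ control (the commutator of $\Box$ with $\nabla^2$ produces $\nabla\Ric*\nabla u$ terms), which is not among the hypotheses.

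The paper's fix is to split $[s,s']$ at its midpoint $\bar s$. On $[\bar s,s']$ the $\tfrac{1}{t-s}$ blowup is harmless since $t-s\ge\tfrac12(s'-s)$, and one integrates $\Box_{g_2}u_1$ against $d\nu^2_t$ directly. On $[s,\bar s]$ the paper instead moves the derivatives onto the conjugate heat kernel density $v_{2,t}$: writing $d\nu^2_t=v_{2,t}\,dg_{2,t}$, one swaps volume forms and bounds $\int_s^{\bar s}\!\int_M u_{1,t}\bigl(\Box^*_{g_1}v_{2,t}\bigr)dg_{1,t}$, and it is \emph{here} that the Nash entropy lower bound enters, via Bamler's integral estimates \cite[Proposition~5.2]{Bam20c} for $\int\!\!\int(|\nabla^2 f_t|+|\nabla f_t|^2)\,d\nu^2_t\,dt$ where $v_{2,t}=(4\pi(s_2-t))^{-n/2}e^{-f_t}$. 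Your paragraph on the Nash entropy (``to make the error terms integrable against the conjugate heat kernel measure'') gestures toward this but does not fit your own scheme, since your errors live in sup-norm comparisons of $u_1,u_2$ and are never paired against $\nu^i$; in your approach there is no natural place for $Y$ to appear.
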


\begin{proof}
The idea of the proof is similar to that of \cite[Lemma 2.7]{Bam20a}. Fix a point $o\in M$ and write
\[
    d\nu^i_t:=d\nu^i_{x_i,s_i\,|\,t}
    = v_{i,t}\, dg_{i,t},
\]
where $d\nu^{i}_{x_i,s_i\,|\,t}$ is the conjugate heat kernel on the Ricci flow $(M,g_{i,t})$ based at the point $(x_i,s_i)$. By the symmetry in the statement of this proposition, we may assume that
\[
    \mathcal{N}^2_{x_2,s_2}(s_2-s)
    \ge -Y.
\]
Let $F:Z_s\to \mathbb{R}$ be an arbitrary $1$-Lipschitz function. By replacing $F$ by $F-F(\phi^1_{s}(o))$ if necessary, we may assume that $F\circ\phi^1_{s}(o)=0$. Then we write $u^0_{i}=F\circ\phi^i_s$ and solve the heat equations
\begin{gather*}
    \Box_{g_{i,t}} u_{i,t} = 0\quad
    \text{ on } \quad M\times [s,s'],\\
    u_{i,s}=u^0_i,
\end{gather*}
for $i=1$, $2$. In fact, we will not need $u_{2,t}$ in the following. Obviously, $u^0_i$, $i=1,2$, are both $1$-Lipschitz, and hence
 \cite[Lemma 2.5]{Bam20a} implies that $|\nabla u_{i,t}|_{g_{i,t}}\le 1$ for all $t\in [s,s']$ and $i=1,2$. 
By our assumption on $F$, we have  $u^0_{1}(o)=0$. Then the parabolic maximum principle and the bound of diameter in \eqref{iniquitas} imply
\[
    |u^0_{1}|\le {\rm  diam}(g_{1,t})
    \le C_0 D
\]
and
\begin{eqnarray}\label{finis desuper}
 \sup_{M}|u_{1,t}|\le \sup_{M} |u^0_1| \leq C_0D\quad\text{ for }\quad t\in[s,s']\quad\text{ and }\quad i=1,2.
\end{eqnarray}
Letting $\bar s = (s+s')/2$, we clearly have
\[
    \left.\int u_{1,t} d\nu^2_{t}\,\right|_{t=s}^{t=s'}
    =\left.\int u_{1,t} d\nu^2_{t}\,\right|_{t=\bar s}^{t=s'}
    +\left.\int u_{1,t} d\nu^2_{t}\,\right|^{t=\bar s}_{t=s}.
\]
We shall estimate the two terms above separately. First of all, we have
\begin{align*}
    \left.\int u_{1,t} d\nu^2_{t}\,\right|_{t=\bar s}^{t=s'}
    &= \int_{\bar s}^{s'} \int_M 
    \left(\Box_{g_{2,t}} u_{1,t}\right) d\nu^2_{t}dt.
\end{align*}
Note that for $t\in (s,s']$, we have
\begin{align*}
   \left|\Box_{g_{2,t}} u_{1,t}\right|
    &=\left|\Box_{g_{2,t}} u_{1,t}
    - \Box_{g_{1,t}} u_{1,t}\right|
    = \left|\Delta_{g_{2,t}} u_{1,t}
    - \Delta_{g_{1,t}} u_{1,t}\right|\\
    &\le C \left\|g_{1,t}-g_{2,t}\right\|_{C^2} \cdot \sup_M\left(
    |\nabla^2 u_{1,t}|_{g_{1,t}}
    +
    |\nabla u_{1,t}|_{g_{1,t}}
    \right) \\
    & \le C \varepsilon 
   \frac{1+\sqrt{t-s}}{t-s},
\end{align*}
where $C$ depends on $C_0$, $D$, $\Lambda$, $n$, and we have applied the standard Bando-Bernstein-Shi estimates for the heat equation (see, e.g., \cite[Lemma 9.14]{Bam20b}). It follows that
\begin{align}
\label{ineq: second half}
    \left|\left.\int u_{1,t} d\nu^2_{t}\right|_{t=\bar s}^{t=s'}\,\right|
    &\le  C \varepsilon (1+ \sqrt{s'-s})\leq C\varepsilon(1+s'-s).
\end{align}

On the other hand, for any $t\in [s,s']$, we have
\begin{align*}
  \left|\int u_{1,t} d\nu^2_{t}
    - \int u_{1,t} v_{2,t}\, dg_{1,t}\right|
&\le C_0D \int v_{2,t}\,\left|dg_{2,t}-dg_{1,t}\right|  \\
&\le  C(C_0,D) \varepsilon \int v_{2,t}\,dg_{2,t} 
\le C \varepsilon,
\end{align*}
where we have applied (\ref{finis desuper}). Hence we have
\begin{align}\label{secondhalf_piece1}
    \left|\left.\int u_{1,t} d\nu^2_{t}\,\right|^{t=\bar s}_{t=s}
    - \left.\int u_{1,t} v_{2,t}\, dg_{1,t}\,\right|^{t=\bar s}_{t=s}\right|
    &\le  C \varepsilon
\end{align}
 Next,
\begin{align}\label{secondhalf_piece2}
   \left.\int u_{1,t} v_{2,t}\, dg_{1,t}\,\right|^{t=\bar s}_{t=s}
   &= -\int_{s}^{\bar s} u_{1,t}\left(\Box^*_{g_{1,t}}v_{2,t}\right)\, dg_{1,t},
\end{align}
and we also have
\begin{align}\label{secondhalf_piece3}
  \left|\Box^*_{g_{1,t}}v_{2,t}\right|
  & = \left|\Box^*_{g_{1,t}}v_{2,t}
   - \Box^*_{g_{2,t}}v_{2,t}\right|\\\nonumber
    &\le  \left|\Delta_{g_{1,t}}v_{2,t}
   - \Delta_{g_{2,t}}v_{2,t}\right|
   + |R_{g_{1,t}}-R_{g_{2,t}}|v_{2,t}\\\nonumber
   &\le C \varepsilon 
   \left(
    |\nabla^2 v_{2,t}|_{g_{2,t}}
    +|\nabla v_{2,t}|_{g_{2,t}}
    + v_{2,t}
   \right).
\end{align}
Writing $v_{2,t}=(4\pi (s_2-t))^{-n/2} e^{-f_t}$, we have
\[
    \nabla v_{2,t} = - v_{2,t}\nabla f_t,\quad
    \nabla^2 v_{2,t}
    = v_{2,t}\left(\nabla f_t \otimes \nabla f_t - \nabla^2 f_t\right).
\] 
Therefore, applying \cite[Proposition 5.2]{Bam20c} and \eqref{iniquitas}, we have
\begin{align}\label{secondhalf_piece4}
    &\int_{s}^{\bar s}\int_M \left(|\nabla^2 v_{2,t}|_{g_{2,t}}
+ |\nabla v_{2,t}|_{g_{2,t}}
+ v_{2,t}\right)\, dg_{1,t}dt
\\\nonumber
\leq\ & C\int_{s}^{\bar s}\int_M\left(|\nabla^2 f_t|+|\nabla f_t|^2+|\nabla f_t|+1\right)v_{2,t}\, (1+\varepsilon)dg_{2,t}dt
\\\nonumber
\leq\ & C\left(\int_s^{\bar s}\int_M(s_2-t)|\nabla^2 f_t|^2d\nu^2_tdt\right)^{\frac{1}{2}}\left(\int_s^{\bar s}\int_M\frac{1}{s_2-t}d\nu^2_tdt\right)^{\frac{1}{2}}
\\\nonumber
&+\,C\left(\int_s^{\bar s}\int_M(s_2-t)|\nabla f_t|^4d\nu^2_tdt\right)^{\frac{1}{2}}\left(\int_s^{\bar s}\int_M\frac{1}{s_2-t}d\nu^2_tdt\right)^{\frac{1}{2}}
\\\nonumber
&+\,C\left(\int_s^{\bar s}\int_M|\nabla f_t|^2d\nu^2_tdt\right)^{\frac{1}{2}}\left(\int_s^{\bar s}\int_M1\,d\nu^2_tdt\right)^{\frac{1}{2}}+C(s'-s)
\\\nonumber
\le\ & C(Y)(1+ \sqrt{s'-s}+s'-s)\leq C(Y)(1+s'-s).
\end{align}
Combining (\ref{finis desuper}), (\ref{secondhalf_piece1}), (\ref{secondhalf_piece2}), (\ref{secondhalf_piece3}), and (\ref{secondhalf_piece4}), we have
\begin{equation}
\label{ineq: first half}
   \left|\left.\int u_{1,t} d\nu^2_{t}\,\right|^{t=\bar s}_{t=s}
     \right|
     \le C(C_0,D,Y)\varepsilon (1+ {s'-s}). 
\end{equation}
It then follows from \eqref{ineq: second half} and \eqref{ineq: first half} that
\begin{eqnarray}\label{summa differentiae}
 \left|\left.\int u_{1,t} d\nu^2_{t}\right|^{t=s'}_{t=s}
     \right|
     \le C \varepsilon
     \left(1 + {s'-s}\right).
\end{eqnarray}

By (\ref{iniquitas}) again, we have
\[
   \left| |\nabla u_{1,s'}|^2_{g_{2,s'}} - |\nabla u_{1,s'}|^2_{g_{1,s'}}
   \right|
   \le C \varepsilon,
\]
and hence $u_{1,s'}$ is a $(1+C\varepsilon)$-Lipschitz function on $\left(M,\dist_{g_{2,s'}}\right)$. Therefore, we may find  a $(1+C\varepsilon)$-Lipschitz function $U$ on $Z_{s'}$, such that $U\circ\phi^i_{s'}=u_{1,s'}$ for both $i=1,2.$
It follows that
\[
    \int u_{1,s'} \, d\nu_{s'}^1
    - \int u_{1,s'} \, d\nu_{s'}^2
    \le (1+C\varepsilon)
    \dist^{Z_{s'}}_{{\rm W}_1}\left(
    \phi^1_{s'*} \nu^1_{s'},
    \phi^2_{s'*} \nu^2_{s'}
    \right).
\]
Recall
\[
    \int u_{1,s'} \, d\nu_{s'}^1
    = \int u_{1}^0 \, d\nu_{s}^1,
\]
thus, by (\ref{summa differentiae}), we have
\begin{align*}
    & \int u_{1}^0\,d\nu^1_{s}
    - \int u_{1}^0\, d\nu^2_{s}
    \\
    =\ &\int u_{1,s'}\,d\nu^1_{s'}
    - \int u_{1,s'}\, d\nu^2_{s'} +\left.\int u_{1,t} d\nu^2_{t}\,\right|^{t=s'}_{t=s}
    \\
    \le\ & (1+C\varepsilon)
    \dist^{Z_{s'}}_{{\rm W}_1}\left(
    \phi^1_{s'*} \nu^1_{s'},
    \phi^2_{s'*} \nu^2_{s'}
    \right)
    +C \varepsilon
     \left(1 + {s'-s}\right).
\end{align*}
Since $F$ is $1$-Lipschitz on $Z_{s}$ and because of \eqref{anothernonsenseotherthanthepreviousnonsense}, for any $x\in M$ we have
\[
    \left|u^0_1(x)-u^0_2(x)\right|
    = \left|F(\phi^1_{s}(x))-F(\phi^2_{s}(x))\right|
    \le \dist^{Z_{s}}(\phi^1_{s}(x),\phi^2_{s}(x)) = \varepsilon.
\]
It follows that
\begin{align*}
    &\int_{Z_{s}} F \, d(\phi^1_{s*} \nu^1_{s})
    - \int_{Z_{s}} F\,
    d(\phi^2_{s*} \nu^2_{s})
    \\
    =\ &  \int_{M} u^0_1 \, d \nu^1_{s}
    - \int_{M} u_2^0
    d \nu^2_{s}
    \le
    \int_{M} u^0_1 \, d \nu^1_{s}
    - \int_{M} u_1^0
    d \nu^2_{s} + \varepsilon\\
    \le\ & 
    (1+C\varepsilon)
    \dist^{Z_{s'}}_{{\rm W}_1}\left(
    \phi^1_{s'*} \nu^1_{s'},
    \phi^2_{s'*} \nu^2_{s'}
    \right)
    +C \varepsilon
     \left(1 + {s'-s}\right).
\end{align*}
Since $F$ is arbitrary, the almost monotonicity formula \eqref{ineq: W1 almost mono} is proved. 

\end{proof}

Next, applying Proposition \ref{quantitas W-dist}, we show the following corollary, which is analogous to \cite[Lemma 5.19]{Bam20b}.

\begin{Corollary}
\label{prop: F dist}
Suppose that the same assumptions of the previous proposition hold.
Let $(\mu^i_t)_{t\in I}$ be a conjugate heat flow on $(M,g_{i,t})_{t\in I}$, where $i=1,2$, and let $J\subset I.$
Then 
$$\dist_{\mathbb{F}}^J\big(((M,g_{1,t})_{t\in I},(\mu^1_t)_{t\in I}),
((M,g_{2,t})_{t\in I},(\mu^2_t)_{t\in I})\big)
\le (1+C\varepsilon)r + C \varepsilon (1+|I|),$$
where $r>0$ is any number with the property that there is a measurable subset $E\subset I$ such that $J\subset I\setminus E$ and
\[
    |E|< r^2,\quad
    \sup_{t\in I\setminus E} 
    \dist^{Z_t}_{{\rm W}_1}(\phi^1_{t*}\mu_t^1,\phi^2_{t*}\mu_t^2)
    < r.
\]
\end{Corollary}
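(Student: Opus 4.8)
The plan is to reuse verbatim the correspondence $\bigl(Z_t,\dist^{Z_t},\{\phi^i_t\}_{i=1,2}\bigr)_{t\in I}$ constructed just before Proposition~\ref{quantitas W-dist}, together with the given exceptional set $E$, and to produce the couplings required by Definition~\ref{Def of F-distance} out of nearly optimal $W_1$-couplings of the referential measures. First I would fix $r>0$ as in the statement, with the associated measurable set $E\subset I$ satisfying $J\subset I\setminus E$, $|E|<r^2$, and $\sup_{t\in I\setminus E}\dist^{Z_t}_{W_1}\bigl(\phi^1_{t*}\mu^1_t,\phi^2_{t*}\mu^2_t\bigr)<r$. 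For each $t\in I\setminus E$ the measure $\phi^1_{t*}\mu^1_t$ is supported on $M_1$ and $\phi^2_{t*}\mu^2_t$ on $M_2$, so every coupling of these two measures on $Z_t\times Z_t$ is concentrated on $M_1\times M_2$ and, via the embeddings $\phi^i_t$, corresponds to a coupling $q_t\in\Pi(\mu^1_t,\mu^2_t)$ on $M\times M$ with the same transport cost, and conversely; hence I may choose $q_t\in\Pi(\mu^1_t,\mu^2_t)$ with
\[
\int_{M\times M}\dist^{Z_t}\!\bigl(\phi^1_t(x_1),\phi^2_t(x_2)\bigr)\,dq_t(x_1,x_2)<r ,
\]
the infimum of the left-hand side over $\Pi(\mu^1_t,\mu^2_t)$ being exactly $\dist^{Z_t}_{W_1}\bigl(\phi^1_{t*}\mu^1_t,\phi^2_{t*}\mu^2_t\bigr)<r$. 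No measurability of $t\mapsto q_t$ is needed, because condition (2) of Definition~\ref{Def of F-distance} is imposed pointwise in $(s,t)$.

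Next I would check condition (2). Fix $s<t$ with $s,t\in I\setminus E$. For $(x_1,x_2)\in\XX^1_t\times\XX^2_t=M\times M$ the measures $\nu^i_{x_i\,|\,s}$ are precisely the conjugate heat kernels $\nu^i_{x_i,t\,|\,s}$ based at $(x_i,t)$ on $(M,g_{i,t})$, so the ``in particular'' clause of Proposition~\ref{quantitas W-dist}, applied with $s'=t$ and $s_1=s_2=t$, gives
\[
\dist^{Z_s}_{W_1}\!\bigl(\phi^1_{s*}\nu^1_{x_1,t\,|\,s},\phi^2_{s*}\nu^2_{x_2,t\,|\,s}\bigr)\le(1+C\varepsilon)\,\dist^{Z_t}\!\bigl(\phi^1_t(x_1),\phi^2_t(x_2)\bigr)+C\varepsilon(1+t-s).
\]
Here the Nash-entropy hypothesis of Proposition~\ref{quantitas W-dist} is available since the fixed geometry \eqref{iniquitas} forces, over the bounded interval $I$, a uniform lower bound $\mathcal{N}^i_{x,t}(\tau)\ge -Y$ for all $t\in I$ and $0<\tau\le|I|$ (with $Y$, and therefore also $C$, depending in addition on $|I|$). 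Integrating the displayed estimate against $q_t$ and using the choice of $q_t$ together with $t-s\le|I|$ gives
\[
\int_{\XX^1_t\times\XX^2_t}\dist^{Z_s}_{W_1}\!\bigl(\phi^1_{s*}\nu^1_{x_1\,|\,s},\phi^2_{s*}\nu^2_{x_2\,|\,s}\bigr)\,dq_t\le(1+C\varepsilon)r+C\varepsilon(1+|I|)=:r'.
\]

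Finally I would observe that $r'$ is admissible in Definition~\ref{Def of F-distance} for the correspondence $\bigl(Z_t,\dist^{Z_t},\{\phi^i_t\}\bigr)_{t\in I\setminus E}$: indeed $J\subset I\setminus E$ by hypothesis, $|E|<r^2\le(r')^2$ because $r'\ge r$, and condition (2) holds with $r'$ on the right-hand side by the previous display. Consequently
\[
\dist_{\mathbb{F}}^J\bigl(((M,g_{1,t})_{t\in I},(\mu^1_t)_{t\in I}),((M,g_{2,t})_{t\in I},(\mu^2_t)_{t\in I})\bigr)\le r'=(1+C\varepsilon)r+C\varepsilon(1+|I|),
\]
which is the assertion. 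The only step that is not pure bookkeeping is the pointwise use of Proposition~\ref{quantitas W-dist} inside the integral, and especially the verification that its Nash-entropy hypothesis is met uniformly from \eqref{iniquitas}; the rest consists of unwinding the definition of $\dist_{\mathbb{F}}$ and the marginal structure of couplings concentrated on $M_1\times M_2$.
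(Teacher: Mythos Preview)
Your proof is correct and follows essentially the same route as the paper: choose near-optimal couplings $q_t$ on $I\setminus E$, apply the ``in particular'' case of Proposition~\ref{quantitas W-dist} pointwise, integrate against $q_t$, and read off the $\mathbb{F}$-distance bound from Definition~\ref{Def of F-distance}. You are more explicit than the paper about why the Nash-entropy hypothesis is met and why $|E|<r^2\le (r')^2$, but these are just details the paper leaves implicit.
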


\begin{proof}
Let $r>0$ be a number satisfying the property stated in the proposition.
For any $t\in I\setminus E,$ let $q_t\in \Pi(\mu_t^1,\mu_t^2)$ be a coupling satisfying
\[
    \int_{M\times M} \dist^{Z_t}(\phi^1_t(x), \phi^2_t(y))
    \, dq_t(x,y)
    < r.
\]
Then for any $s,t\in I\setminus E$, $s\le t$, by applying Proposition \ref{quantitas W-dist}, we have
\begin{align*}
    & \int_{M\times M}
    \dist^{Z_{s}}_{{\rm W}_1}\left(
    \phi^1_{s*} \nu^1_{x,t|s},
    \phi^2_{s*} \nu^2_{y,t|s}
    \right)\, dq_t(x,y)\\
\le &\  (1+C\varepsilon)\int_{M\times M}\dist^{Z_t}(\phi^1_t(x),\phi^2_t(y))\,dq_t(x,y)
+C \varepsilon(1+t-s)\\
< &\ (1+C\varepsilon) r + C \varepsilon(1+|I|).
\end{align*}
The conclusion follows from Definition \ref{Def of F-distance}.
\end{proof}

\section{Convergence rate of the referential conjugate heat flow}

It is obvious from Corollary \ref{prop: F dist} that, to estimate the $\mathbb{F}$-convergence rate for the Ricci flows in Theorem \ref{Thm_main_1} and Theorem \ref{Thm_main_2}, we need also to estimate the convergence rate of the referential conjugate heat flow $\mu_t$; this is the goal of the present section.

Let $(M^n,g_t)_{t\in I}$ be the Ricci flow in the statement either of Theorem \ref{Thm_main_1}, in which case $I=(-\infty,0]$, or, of Theorem \ref{Thm_main_2}, in which case $I=(-T,0]$. Let 
\begin{equation}\label{referential CHF}
    d\mu_t:=u(\cdot,t)dg_t:=(4\pi|t|)^{-\frac{n}{2}}e^{-f_t}dg_t,\quad t\in I
\end{equation}
be the referential conjugate heat flow in the statement of Theorem \ref{Thm_main_1} or Theorem \ref{Thm_main_2}.  Note that in the case of Theorem \ref{Thm_main_1}, $\mu_t:=\nu_{p_0,0\,|\,t}$ for some arbitrarily fixed $p_0\in M$; in the case of Theorem \ref{Thm_main_2}, $u$ is a singular conjugate heat kernel constructed in \cite{MM15}. Let $(M,g_o,f_o)$ be the normalized shrinker that generates the unique tangent flow in Theorem \ref{Thm_main_1} or Theorem \ref{Thm_main_2} (c.f. \cite[Corollary 1.3, Corollary 1.4]{CMZ21c}).

Let $\bg_s$, where $s\in[0,\infty)$, be the (backward) modified Ricci flow constructed from $g_t$, as demonstrated in Section 2.1. Furthermore, $\bg_s$ satisfies all the properties listed in Section 2.2.2. For the sake of simplicity, we write 
\begin{equation}\label{bg vs go}
\|\bg_s-g_o\|_{C^{k,\gamma}_{g_o}}\leq Cs^{-\theta} \quad\text{ for all  } \quad s>0,
\end{equation}
where $\theta:=\frac{1-(2-\beta)\alpha}{2\alpha-1}$, $k\gg 1$, and $\gamma\in (0,1)$; in particular, we choose $k$ and $\gamma$ to be the constants in Theorem \ref{Lojaciewicz} and Theorem \ref{derivative bdd of P}.

To simplify our argument, we may, by shifting $s$ if necessary, without loss of generality, assume that $\bg_s$ is very close to $g_o$, so that it satisfies Theorem \ref{Lojaciewicz} and Theorem \ref{derivative bdd of P} for all $s\geq 0$. Furthermore, if $\bg_s$ is very close to $g_o$ for all $s\geq 0$, we also have that $f_{\bg_s}$ is very close to $f_o$ for all $s\geq 0$. Therefore, in view of the fact that $\lambda_1\left(-\Delta_{g_o}^{f_o}\right)> \frac{1}{2}$ (c.f. \cite[Lemma 3.5]{SW15}), where $$\Delta_g^f:=\Delta_g-\langle\nabla_g f,\nabla_g\cdot\,\rangle$$
is the drifted laplacian operator, and $\lambda_1$ is the first nonzero eigenvalue, we also have that 
$$\lambda_1\left(-\Delta_{\bg_s}^{f_{\bg_s}}\right)>\frac{1}{2}\quad\text{ for all }\quad s\geq 0.$$
As a consequence, we have the following Neumann type Poincar\'e inequality.

\begin{Lemma}
For any $s\geq 0$, if $v\in C^\infty(M)$ satisfies $\displaystyle \int_Mve^{-f_{\bg_s}}d\bg_s=0$, then
\begin{equation}\label{Poincare}
    \int_M v^2 e^{-f_{\bg_s}}d\bg_s\leq 2 \int_M |\nabla_{\bg_s}v|^2 e^{-f_{\bg_s}}d\bg_s. 
\end{equation}
\end{Lemma}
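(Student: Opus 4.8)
The plan is to deduce the inequality from the spectral gap $\lambda_1\big(-\Delta_{\bg_s}^{f_{\bg_s}}\big)>\tfrac12$ recorded just above the lemma, via the variational characterization of the first nonzero eigenvalue of the drifted Laplacian.

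First I would record the weighted integration-by-parts formula on the closed manifold $M$: since $\div_{\bg_s}\big(e^{-f_{\bg_s}}\nabla_{\bg_s}v\big)=e^{-f_{\bg_s}}\,\Delta_{\bg_s}^{f_{\bg_s}}v$, integrating against a test function $w\in C^\infty(M)$ and applying the divergence theorem gives
\[
\int_M \big(\Delta_{\bg_s}^{f_{\bg_s}}v\big)\,w\;e^{-f_{\bg_s}}\,d\bg_s \;=\; -\int_M \langle\nabla_{\bg_s}v,\nabla_{\bg_s}w\rangle_{\bg_s}\;e^{-f_{\bg_s}}\,d\bg_s .
\]
In particular $-\Delta_{\bg_s}^{f_{\bg_s}}$ is a nonnegative self-adjoint operator on $L^2\big(M,e^{-f_{\bg_s}}d\bg_s\big)$ with discrete spectrum $0=\lambda_0<\lambda_1\le\lambda_2\le\cdots\to\infty$ whose kernel is the line of constant functions, and taking $w=v$ identifies $\int_M|\nabla_{\bg_s}v|^2e^{-f_{\bg_s}}d\bg_s$ with the Dirichlet energy of $v$ associated to this operator.

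Next I would note that the hypothesis $\int_M v\,e^{-f_{\bg_s}}d\bg_s=0$ says exactly that $v$ is $L^2\big(e^{-f_{\bg_s}}d\bg_s\big)$-orthogonal to the constants, i.e. to the $\lambda_0$-eigenspace. Hence the Rayleigh-quotient (min--max) characterization of $\lambda_1$ yields
\[
\int_M |\nabla_{\bg_s}v|^2 e^{-f_{\bg_s}}\,d\bg_s \;\ge\; \lambda_1\!\big(-\Delta_{\bg_s}^{f_{\bg_s}}\big)\int_M v^2 e^{-f_{\bg_s}}\,d\bg_s ,
\]
and since $\lambda_1\big(-\Delta_{\bg_s}^{f_{\bg_s}}\big)>\tfrac12$ for every $s\ge0$ this gives $\int_M v^2 e^{-f_{\bg_s}}d\bg_s\le 2\int_M|\nabla_{\bg_s}v|^2 e^{-f_{\bg_s}}d\bg_s$. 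Equivalently, one may expand $v=\sum_{i\ge1}a_i\varphi_i$ in the $L^2\big(e^{-f_{\bg_s}}d\bg_s\big)$-orthonormal eigenbasis $\{\varphi_i\}_{i\ge0}$ (so that $\varphi_0$ is constant and the constraint forces $a_0=0$), whence $\int_M v^2 e^{-f_{\bg_s}}d\bg_s=\sum_{i\ge1}a_i^2$ and $\int_M|\nabla_{\bg_s}v|^2 e^{-f_{\bg_s}}d\bg_s=\sum_{i\ge1}\lambda_ia_i^2\ge\lambda_1\sum_{i\ge1}a_i^2$.

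There is essentially no obstacle here: the entire content of the lemma is the spectral-gap estimate $\lambda_1>\tfrac12$, which has already been established above from \cite[Lemma 3.5]{SW15} together with the closeness of $f_{\bg_s}$ to $f_o$; what remains is the routine passage from a lower bound on the first eigenvalue to the corresponding weighted Poincar\'e inequality. The only point worth a word of care is that the gap, and hence the constant $2$, holds uniformly in $s\ge0$.
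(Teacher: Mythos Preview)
Your proposal is correct and follows exactly the approach the paper intends: the paper does not give a separate proof of this lemma but simply records it as a consequence of the spectral gap $\lambda_1\big(-\Delta_{\bg_s}^{f_{\bg_s}}\big)>\tfrac12$ established in the preceding paragraph, and your argument via the Rayleigh quotient (or equivalently the eigenfunction expansion) is precisely the standard passage from that spectral gap to the weighted Poincar\'e inequality.
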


Next, we also modify the referential conjugate heat flow. Let us define
\begin{align}
    \tf_s&:=\left\{\begin{array}{ll}
    f_{-e^s} & \text{in the backward case}
    \\
    f_{-e^{-s}} &\text{in the forward case}
    \end{array}\right.\\\nonumber
    \bf_s&:=\tf_s\circ\psi_s
\end{align}
where $f_t$ is the function in \eqref{referential CHF}, and $\psi_s$ is the $1$-parameter family of self-diffeomorphism defined by (\ref{backward_modified_RF}) in the backward case, or by (\ref{modified_RF}) in the forward case. Perelman's monotonicity formula obviously implies that
\begin{equation*}
    \bf_s\longrightarrow f_o\quad \text{ smoothly, }\quad \text{when } s\to \infty.
\end{equation*}
Comsequently, we also have
\begin{equation}\label{coarseconvergenceoff2}
    \left|\bf_s-f_{\bg_s}\right|\to 0\quad \text{ uniformly.}
\end{equation}
Our goal is to estimate the rate of these convergences. Now we split our argument into two cases.
\\

\noindent\textbf{(1) Backward case.} It follows from the definition of $u$ that 
\begin{equation}\label{eqn for tf}
\partial_s \tilde{f}_s-\Delta_{\tg_s}\tilde{f}_s+|\nabla_{\tg_s}\tilde{f}_s|^2-R_{\tg_s}+\frac{n}{2}=0.
\end{equation}
We will compare $\bf_s$ to $f_{\bg_s}$. To this end, we start with deriving an evolution equation satisfied by $\bf_s-f_{\bg_s}$. By virtue of \eqref{eqn for tf}, we have
\begin{eqnarray*}
\partial_s \bf_s&=&\left(\partial_s \tf_s\right)\circ \psi_s+\langle\nabla_{\tg_s}\tf_s,\nabla_{\tg_s}f_{\tg_s}\rangle\circ\psi_s\\
&=&\left(\partial_s \tf_s\right)\circ \psi_s+\langle\nabla_{\bg_s}\bf_s,\nabla_{\bg_s}f_{\bg_s}\rangle\\
&=&\Delta_{\bg_s}\bf_s-|\nabla_{\bg_s}\bf_s|^2+R_{\bg_s}-\frac{n}{2}+\langle\nabla_{\bg_s}\bf_s,\nabla_{\bg_s}f_{\bg_s}\rangle\\
&=&\Delta_{\bg_s}\left(\bf_s-f_{\bg_s}\right)-\langle\nabla_{\bg_s}\bf_s,\nabla_{\bg_s}\left(\bf_s-f_{\bg_s}\right)\rangle+\Delta_{\bg_s}f_{\bg_s}+R_{\bg_s}-\frac{n}{2}\\
&=&\Delta_{\bg_s}^{f_{\bg_s}}\left(\bf_s-f_{\bg_s}\right)-\left|\nabla_{\bg_s}\left(\bf_s-f_{\bg_s}\right)\right|^2+\tfrac{1}{2}\text{tr }_{\bg_s}\left(\partial_s \bg_s\right),
\end{eqnarray*}
where the norms and inner products are all computed with respect to the evolving metric $\bg_s$. Since $f_g=P(g)$, we have $\partial_s f_{\bg_s}= DP_{\bg_s}\left( \partial_s \bg_s\right)$, where $DP$ is the linearization of the analytic map $P:g\mapsto f_g$. Hence
\begin{equation}\label{eqn for fb-bf}
    \partial_s \left(f_{\bg_s}-\bf_s\right)=\Delta_{\bg_s}^{f_{\bg_s}}\left(f_{\bg_s}-\bf_s\right)+\left|\nabla_{\bg_s}\left(\bf_s-f_{\bg_s}\right)\right|^2 +\left(D P_{\bg_s}-\tfrac{1}{2}\text{tr }_{\bg_s}\right)\left(\partial_s \bg_s\right).
\end{equation}
Let 
\begin{equation}\label{someotherbasicdefinitions}
    v_s:=e^{f_{\bg_s}-\bf_s},\quad
\EE_s := \left(D P_{\bg_s}-\tfrac{1}{2}\text{tr }_{\bg_s}\right)\left(\partial_s \bg_s\right),
\end{equation}
then \eqref{eqn for fb-bf} can be rewritten as
\begin{eqnarray*}
\partial_s v_s=v_s\,\partial_s \left(f_{\bg_s}-\bf_s\right)&=&v_s\,\Delta_{\bg_s}^{f_{\bg_s}}\left(f_{\bg_s}-\bf_s\right)+v_s\,\left|\nabla_{\bg_s}\left(\bf_s-f_{\bg_s}\right)\right|^2 
+ \EE_s v_s
\\
&=& \Delta_{\bg_s}^{f_{\bg_s}} v_s
+ \EE_s v_s.
\end{eqnarray*}

By \eqref{coarseconvergenceoff2}, it is clear that $v_s\to 1$ uniformly. We shall then estimate the rate of this convergence. Let us define 
\begin{equation}\label{somebasicdefinitions}
d\bar \nu_s := (4\pi)^{-n/2}e^{-f_{\bg_s}}\, d\bg_s,\quad
Z(s):=
\left(\int (v_s-1)^{2}
\, d\bar\nu_s\right)^{\frac{1}{2}}.
\end{equation}
Then $d\bar\nu_s$ is a probability measure with
\[
    \partial_s d\bar\nu_s
    = -\EE_s \, d\bar\nu_s.
\]

\begin{Lemma}\label{v is bdd}
There is a positive constant $C_1$ such that for all $s\ge 0$,
\begin{equation}\label{eq: v bdd}
    C_1^{-1}\le v_s\le C_1.
\end{equation}
\end{Lemma}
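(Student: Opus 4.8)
The plan is to run a maximum principle and Gr\"onwall argument on the evolution equation $\partial_s v_s = \Delta_{\bg_s}^{f_{\bg_s}}v_s + \EE_s v_s$ derived just above, exploiting that $v_s = e^{f_{\bg_s}-\bf_s}$ is strictly positive. The one quantitative input needed is that $\EE_s$ is integrable in time, i.e.\ $\int_0^\infty \|\EE_s\|_{C^0(M)}\,ds < \infty$; everything else is soft.

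To obtain the integrability of $\EE_s = \left(DP_{\bg_s}-\tfrac12\tr_{\bg_s}\right)(\partial_s\bg_s)$: by Theorem~\ref{derivative bdd of P} the operator $DP_{\bg_s}$ has uniformly bounded norm (recall $\bg_s$ has been shifted so as to lie in the neighbourhood $\mathcal V$ for all $s\ge 0$), so $\|DP_{\bg_s}(\partial_s\bg_s)\|_{C^0}\le \|DP_{\bg_s}(\partial_s\bg_s)\|_{C^{k,\gamma}_{g_o}}\le C\|\partial_s\bg_s\|_{C^{k,\gamma}_{g_o}}$; since $\bg_s$ stays uniformly close to $g_o$ we also have $\|\tr_{\bg_s}(\partial_s\bg_s)\|_{C^0}\le C\|\partial_s\bg_s\|_{C^{k,\gamma}_{g_o}}$, hence $\|\EE_s\|_{C^0}\le C\|\partial_s\bg_s\|_{C^{k,\gamma}_{g_o}}$. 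Taking $l=k$ in \eqref{十十} (or its forward counterpart) and integrating over $[0,\infty)$, together with $\mu_{\bg_s}\to\mu_{g_o}$ and $1-(2-\beta)\alpha>0$, yields $\int_0^\infty\|\partial_s\bg_s\|_{C^{k,\gamma}_{g_o}}\,ds\le C(\beta,k)\,(\mu_{\bg_0}-\mu_{g_o})^{1-(2-\beta)\alpha}=:K<\infty$, so that $\int_0^\infty\|\EE_s\|_{C^0}\,ds\le CK<\infty$.

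Now set $\Phi(s):=\max_M v_s$, which is locally Lipschitz in $s$. At a spatial maximum point $\Delta_{\bg_s}^{f_{\bg_s}}v_s\le 0$, so $\tfrac{d}{ds}\Phi(s)\le \|\EE_s\|_{C^0}\Phi(s)$ for a.e.\ $s$, and Gr\"onwall gives $\Phi(s)\le \Phi(0)\exp\big(\int_0^\infty\|\EE_\sigma\|_{C^0}\,d\sigma\big)$. Since $v_0=e^{f_{\bg_0}-\bf_0}$ is a positive smooth function on the closed manifold $M$, $\Phi(0)<\infty$, giving the upper bound $v_s\le C_1$. Applying the same argument to $\min_M v_s$ — where $\Delta_{\bg_s}^{f_{\bg_s}}v_s\ge 0$ and $v_s>0$, so the zeroth-order term carries the favourable sign — yields $\min_M v_s\ge(\min_M v_0)\exp\big(-\int_0^\infty\|\EE_\sigma\|_{C^0}\,d\sigma\big)>0$, i.e.\ $v_s\ge C_1^{-1}$ after enlarging $C_1$.

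The only real work is the bookkeeping in the second paragraph: matching the norm in which $DP$ is bounded (namely $C^{k,\gamma}_{g_o}$) with the norm in which $\partial_s\bg_s$ decays integrably, and checking that $1-(2-\beta)\alpha>0$ so that the integral of \eqref{十十} telescopes to a finite quantity; no genuine analytic obstacle appears. One could even bypass the PDE altogether: $\bf_s$ and $f_{\bg_s}$ both converge uniformly to $f_o$ and depend continuously on $s\in[0,\infty)$ with values in $C^0(M)$, so $\sup_{s\ge0}\|\bf_s-f_{\bg_s}\|_{C^0(M)}<\infty$ and one may take $C_1=\exp\big(\sup_{s\ge0}\|\bf_s-f_{\bg_s}\|_{C^0(M)}\big)$; but the maximum-principle proof is the one consistent with the energy estimates for $Z(s)$ carried out in the rest of this section.
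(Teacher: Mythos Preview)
Your argument is correct, but it is not the route the paper takes. The paper's proof is very short and non-PDE: it observes that the ancient flow is Type~I, invokes the Gaussian upper and lower bounds for the conjugate heat kernel from \cite{X17} together with the uniform diameter bound $\operatorname{diam}_{\bg_s}(M)\le 2\operatorname{diam}_{g_o}(M)$ to get $|\bf_s|\le C'$, and then uses the closeness of $\bg_s$ to $g_o$ (hence of $f_{\bg_s}$ to $f_o$) to get $|f_{\bg_s}|\le C'$; the bound on $v_s=e^{f_{\bg_s}-\bf_s}$ is then immediate. So the paper imports an external heat-kernel estimate, while your main argument is internal to the \L ojasiewicz machinery: you use the integrability of $\|\partial_s\bg_s\|_{C^{k,\gamma}_{g_o}}$ coming from \eqref{十十} and Theorem~\ref{derivative bdd of P}, then run a maximum principle/Gr\"onwall on the scalar equation $\partial_s v_s=\Delta_{\bg_s}^{f_{\bg_s}}v_s+\EE_s v_s$. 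Your approach has the advantage of not appealing to the Type~I property or to \cite{X17}, and it meshes naturally with the $Z(s)$ estimate that follows; the paper's approach has the advantage of being a two-line pointer to known heat-kernel bounds. Your ``soft'' alternative in the last paragraph is essentially the paper's argument with the Gaussian bounds replaced by the already-stated smooth convergence $\bf_s\to f_o$.
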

\begin{proof}
Since the original Ricci flow $g_t$ is of Type I, applying the gaussian upper and lower estimates of \cite[Theorem 3.5]{X17}, in combination with the radius bound
$$\operatorname{diam}_{\bg_s}(M)\le 2\operatorname{diam}_{g_o}(M)\leq C\quad\text{ for all }\quad s\ge 0,$$
we have that, there exists a constant $C'>0$, such that
$$-C'\le \bf_s\le C'\quad\text{ for all }\quad s\ge 0.$$
On the other hand, since $\bg_s$ and $g_o$ are close and consequently $f_{\bg_s}$ and $f_o$ are close, we also have
$$ -C'\le f_{\bg_s}\le C'\quad \text{ for all }\quad s\ge 0.$$
The lemma then follows from the definition of $v_s$.
\end{proof}

\begin{Lemma}
For $Z(s)$ defined in \eqref{somebasicdefinitions}, we have
\[
Z(s)\le C s^{-\theta},
\]
for $s>0,$ where $\theta:=\frac{1-(2-\beta)\alpha}{2\alpha-1}$ is the same constant as in \eqref{bg vs go}.
\end{Lemma}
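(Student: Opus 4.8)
The plan is to turn the stated evolution equations for $v_s$ and $\bar\nu_s$ into a differential inequality for $Z(s)^2=\int(v_s-1)^2\,d\bar\nu_s$, absorb the leading dissipative term using the Poincar\'e inequality \eqref{Poincare}, and treat the $\EE_s$--contributions as errors controlled through Theorem \ref{derivative bdd of P} and the integrated decay coming from \eqref{十十} and \eqref{mu decay}. A Gr\"onwall argument then produces the factor $e^{-s/4}$ that beats the errors and yields the rate $s^{-\theta}$.

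The first thing I would establish — and the step that makes the whole scheme work — is that $\int_M v_s\,d\bar\nu_s=1$, hence $\int_M(v_s-1)\,d\bar\nu_s=0$, so that the Poincar\'e inequality \eqref{Poincare} applies to $v_s-1$. Indeed, $d\bar\nu_s=(4\pi)^{-n/2}e^{-f_{\bg_s}}\,d\bg_s$ is a probability measure by the normalization of $f_{\bg_s}=P(\bg_s)$, while, pulling back by $\psi_s$, one has $\int_M v_s\,d\bar\nu_s=(4\pi)^{-n/2}\int_M e^{-\bf_s}\,d\bg_s=(4\pi)^{-n/2}\int_M e^{-\tf_s}\,d\tg_s$, and this equals the total mass of $\mu_{-e^s}$, namely $1$, after unwinding $\tg_s=e^{-s}g_{-e^s}$, $\tf_s=f_{-e^s}$ and \eqref{referential CHF}.

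Next I would differentiate: since $\Delta^{f_{\bg_s}}_{\bg_s}$ is self-adjoint with respect to $d\bar\nu_s$ and kills constants, integration by parts gives
\[
\tfrac{d}{ds}Z(s)^2=-2\int|\nabla_{\bg_s}(v_s-1)|^2\,d\bar\nu_s+2\int(v_s-1)\EE_s v_s\,d\bar\nu_s-\int(v_s-1)^2\EE_s\,d\bar\nu_s.
\]
By \eqref{Poincare} the first term is $\le -Z(s)^2$. For the rest, Lemma \ref{v is bdd} gives $\|v_s\|_{C^0}\le C_1$ and, by Cauchy-Schwarz against the probability measure, $\int_M|v_s-1|\,d\bar\nu_s\le Z(s)$, so the two error terms are bounded by $C\|\EE_s\|_{C^0}\big(Z(s)+Z(s)^2\big)$. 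By Theorem \ref{derivative bdd of P} together with the closeness of $\bg_s$ to $g_o$, $\|\EE_s\|_{C^0}\le C\|\partial_s\bg_s\|_{C^{k,\gamma}_{g_o}}\to 0$; choosing $S_0$ with $C\|\EE_s\|_{C^0}\le\tfrac12$ for $s\ge S_0$ yields $\tfrac{d}{ds}Z^2\le-\tfrac12 Z^2+C\|\EE_s\|_{C^0}Z$, hence, since $Z$ is locally Lipschitz (being the square root of a smooth nonnegative function) and the inequality for $Z$ then holds a.e., $\tfrac{d}{ds}Z\le-\tfrac14 Z+C\|\EE_s\|_{C^0}$ for $s\ge S_0$.

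Finally I would integrate the integrating factor: $\tfrac{d}{ds}\big(e^{s/4}Z\big)\le Ce^{s/4}\|\EE_s\|_{C^0}$. Integrating over $[s/2,s]$, using $\|\EE_\sigma\|_{C^0}\le C\|\partial_\sigma\bg_\sigma\|_{C^{k,\gamma}_{g_o}}$ and the bound $\int_s^\infty\|\partial_\sigma\bg_\sigma\|_{C^{k,\gamma}_{g_o}}\,d\sigma\le C\,(\mu_{\bg_s}-\mu_{g_o})^{1-(2-\beta)\alpha}\le Cs^{-\theta}$ (which follows by integrating \eqref{十十} and applying \eqref{mu decay}), together with the uniform bound $Z\le C_1+1$ from Lemma \ref{v is bdd}, gives $Z(s)\le Ce^{-s/8}+Cs^{-\theta}$ for $s$ large, whence $Z(s)\le Cs^{-\theta}$ for all $s>0$ after enlarging $C$ on the bounded range of $s$. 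The main obstacle is the normalization identity $\int(v_s-1)\,d\bar\nu_s=0$: it is exactly what licenses the spectral-gap estimate \eqref{Poincare}, which in turn supplies the decisive exponential factor; by contrast, the error bookkeeping and the ODE integration are routine, and the degeneracy of $Z$ at its zeros is harmless by the local Lipschitz remark above.
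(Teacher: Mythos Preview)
Your proof is correct and follows essentially the same route as the paper: both derive the differential inequality for $Z^2$ via the Poincar\'e inequality \eqref{Poincare} (using $\int v_s\,d\bar\nu_s=1$), Lemma~\ref{v is bdd}, and the bound on $\EE_s$ from Theorem~\ref{derivative bdd of P} together with \eqref{十十} and \eqref{mu decay}. The only cosmetic difference is the ODE endgame: the paper introduces the comparison function $\zeta(s)=C_0(\mu_{\bg_s}-\mu_{g_o})^{1-(2-\beta)\alpha}$ and does a case split on $Z\lessgtr\zeta$, whereas you first absorb the $\|\EE_s\|_{C^0}Z^2$ term for $s\ge S_0$ and then use the integrating factor on $[s/2,s]$; both are standard and yield the same $s^{-\theta}$ rate.
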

\begin{proof}
\begin{align}\label{somenonsensecomputation}
      (Z^2)'(s)
    &= \int 2(v_s-1)\partial_s v_s  \, d\bar\nu_s-\int (v_s-1)^2 \EE_s  \, d\bar\nu_s\\\nonumber
    &= \int 2(v_s-1)\left(\Delta_{\bg_s}^{f_{\bg_s}} v_s+\EE_s v_s\right)  \, d\bar\nu_s-\int (v_s-1)^2 \EE_s  \, d\bar\nu_s\\\nonumber
    &=-2\int |\nabla_{\bg_s} (v_s-1)|^2  \, d\bar\nu_s+ \int (v_s^2-1)\EE_s  \, d\bar\nu_s\\\nonumber
    &\le - \int (v_s-1)^2  \, d\bar\nu_s + \sup_M \left(|v_s+1|\cdot|\EE_s|\right)\int |v_s-1| \, d\bar\nu_s\\\nonumber
     &=-Z^2(s)+C\sup_M|\EE_s|\cdot Z(s),
     \end{align}
where we have applied \eqref{Poincare}, the fact that $\int v_sd\bar\nu_s\equiv 1$, and Lemma \ref{v is bdd}. For the $\EE_s$ term, we may estimate using 
\eqref{someotherbasicdefinitions} and Theorem \ref{derivative bdd of P}:
\begin{equation}\label{EstimateofEE}
    \sup_M|\EE_s|\leq C\|\partial_s\bg_s\|_{C^0_{g_o}}\leq -C\tfrac{d}{ds}\left(\mu_{\bg_s}-\mu_{g_o}\right)^{1-(2-\beta)\alpha}.
\end{equation}
Consequently, \eqref{somenonsensecomputation} becomes
\begin{equation}\label{comparizonODE}
    Z'(s)
    \le - \tfrac{1}{2}Z(s)
    - C\tfrac{d}{ds}\left(\mu_{\bg_s}-\mu_{g_o}\right)^{1-(2-\beta)\alpha}.
\end{equation}

Let us define
\[
    \zeta(s):= C_0\left(\mu_{\bg_s}-\mu_{g_o}\right)^{1-(2-\beta)\alpha},
\]
where $C_0$ is some large constant depending on $v_0$ and $g_o$ to be determined. We may first choose $C_0\ge C$, where $C$ is the constant in \eqref{comparizonODE}, so that
\begin{equation}\label{comparizonODE1}
        (Z+\zeta)'(s)
    \le - \tfrac{1}{2}Z(s).
\end{equation}
Recall that \eqref{mu decay} implies that
\begin{equation}
\label{ineq: zeta}
    \zeta(s)
    \le C_0C s^{-\theta}\quad\text{ for all }\quad s>0.
\end{equation}
On the other hand, $\mu_{\bg_s}-\mu_{g_o}$ is strictly positive, for otherwise the Ricci flow in question is trivial, so
we may enlarge $C_0$ such that $Z(0)\le \zeta(0)/2.$

Let us now proceed to estimate $Z(s)$. We fix an arbitrary $s\ge 1$. If $Z(s)\leq \zeta(s)$, then, by \eqref{ineq: zeta}, we are done. Let us assume $Z(s)>\zeta(s)$, and define
\[
    s_1:=\inf\{s'>0: Z\ge \zeta \text{ on the interval }[s',s]\}.
\]
Since $Z(0)\le \zeta(0)/2$, we must have $s_1\in(0,s)$. Hence, $Z(s_1)=\zeta(s_1)$. Then, on the interval $[s_1,s]$, the fact $Z\geq\zeta$ together with \eqref{comparizonODE1} implies
\[
    (Z+\zeta)'\le 
    - \tfrac{1}{2}Z
    \le -\tfrac{1}{4}(Z+\zeta).
\]
Integrating the above inequality from $s_1$ to $s$, we have
\[
(Z+\zeta)(s)
\le (Z+\zeta)(s_1)e^{-\frac{1}{4}(s-s_1)}=2\zeta(s_1)e^{-\frac{1}{4}(s-s_1)}
\le C s_1^{-\theta}e^{-\frac{1}{4}(s-s_1)},
\]
where we have also applied \eqref{ineq: zeta}.
If $s_1\geq \frac{1}{2}s$, then
\[
Z(s)
\le (Z+\zeta)(s)
\le Cs_1^{-\theta}
\le C2^{-\theta} s^{-\theta}.
\]
If $s_1\leq \frac{1}{2}s$, then
\[
    Z(s)
    \le 2\zeta(s_1)e^{-s/8}
    \le Cs^{-\theta};
\]
note that $\zeta$ is uniformly bounded.
\end{proof}

Now we summarize the conclusion in the backward case.

\begin{Proposition}\label{conclusionofthereferentialconvergence}
Let $(M^n,g_t)_{t\in(-\infty,0]}$ be the ancient Ricci flow in the statement of Theorem \ref{Thm_main_1}, where $M$ is a closed manifold. Let $d\mu_t:=(4\pi|t|)e^{-f_t}dg_t$ be the referential conjugate heat flow. Let $f^*_{g_t}$ be the minimizer of $\mu(g_t,|t|)$, where $\mu$ is Perelman's $\mu$-functional. Then for any $\theta'\in(0,\theta)$, there is a positive constant $C$, such that
\[
    \|f_t - f^*_{g_t}\|_{C^0}
    \le C(\log|t|)^{-\theta'}\quad \text{ for all }\quad t\leq -1.
\]
Note that the $C^0$ norm is independent of the choice of the metric.
\end{Proposition}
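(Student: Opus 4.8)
The plan is to reduce the claimed logarithmic decay to a $C^0$ estimate on $f_{\bg_s}-\bf_s$ with $s=\log|t|$, and then to upgrade the $L^2$-decay of $v_s=e^{f_{\bg_s}-\bf_s}$ established above to a $C^0$-decay by interpolation against uniform higher-order bounds.

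\emph{Reduction via scaling.} First I would use the scale invariance $\mathcal W(cg,f,c\tau)=\mathcal W(g,f,\tau)$ of Perelman's $\mathcal W$-functional: it identifies the minimizer $f^*_{g_t}$ of $\mu(g_t,|t|)$ with the minimizer of $\mu_{\tg_s}=\mu(\tg_s,1)$, where $\tg_s=|t|^{-1}g_t=e^{-s}g_{-e^s}$ as in \eqref{backward_modified_RF}. Since $\tg_s=(\psi_s^{-1})^*\bg_s$ and $\bg_s$ lies in the regular neighborhood $\mathcal U$ of $g_o$, this minimizer exists, is unique, and equals $f_{\bg_s}\circ\psi_s^{-1}$; likewise $\tf_s=f_{-e^s}=f_t$ and $\bf_s=\tf_s\circ\psi_s$, so that $\bf_s-f_{\bg_s}=(f_t-f^*_{g_t})\circ\psi_s$. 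As the $C^0$-norm is $\psi_s$-invariant, it then suffices to prove $\|\bf_s-f_{\bg_s}\|_{C^0}\le Cs^{-\theta'}$ for $s\ge 1$; the remaining range $1\le|t|\le e$ is vacuous, since there the right-hand side of the Proposition is $\ge$ a positive constant while $\|f_t-f^*_{g_t}\|_{C^0}$ is uniformly bounded (e.g.\ by the bounds obtained in the proof of Lemma \ref{v is bdd}).

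\emph{Interpolation.} By Lemma \ref{v is bdd} we have $C_1^{-1}\le v_s\le C_1$, and since $\log$ is Lipschitz on $[C_1^{-1},C_1]$ with $f_{\bg_s}-\bf_s=\log v_s$, it is enough to bound $\|v_s-1\|_{C^0}$. Two ingredients are available: the already-established $\|v_s-1\|_{L^2(d\bar\nu_s)}=Z(s)\le Cs^{-\theta}$, which (because $\bg_s$ and $f_{\bg_s}$ are uniformly comparable to $g_o$ and $f_o$) gives $\|v_s-1\|_{L^2(dg_o)}\le Cs^{-\theta}$; and uniform higher-order bounds $\|v_s-1\|_{C^m_{g_o}}\le C(m)$ for every $m$, obtained from uniform $C^m$-bounds on $f_{\bg_s}=P(\bg_s)$ (elliptic bootstrap of $L(\bg_s,f_{\bg_s})=0$ using the uniform curvature bounds on $\bg_s$ from Section 2.2.2) and on $\bf_s$ (these follow from the \emph{qualitative} smooth convergence $\bf_s\to f_o$). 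Then, for any integer $m>n/2$, the Gagliardo--Nirenberg interpolation inequality on the closed manifold $(M,g_o)$ yields $\|v_s-1\|_{C^0}\le C\|v_s-1\|_{C^m_{g_o}}^{1-\sigma_m}\|v_s-1\|_{L^2(dg_o)}^{\sigma_m}$ with $\sigma_m=1-\frac{n}{2m}\to1$; choosing $m$ so large that $\sigma_m\theta\ge\theta'$ gives $\|v_s-1\|_{C^0}\le Cs^{-\sigma_m\theta}\le Cs^{-\theta'}$ for $s\ge 1$. Tracing back through the reduction, $\|f_t-f^*_{g_t}\|_{C^0}=\|\bf_s-f_{\bg_s}\|_{C^0}\le C(\log|t|)^{-\theta'}$ for all $t\le -1$.

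\emph{Main obstacle.} The only non-routine point is passing from $L^2$-decay (in the weighted measure $d\bar\nu_s$) to $C^0$-decay; the interpolation route works precisely because we separately possess uniform, though not rate-controlled, $C^m$-bounds, and the inevitable cost is the arbitrarily small loss $\theta'<\theta$. An alternative would be a parabolic Moser iteration for $\partial_s v_s=\Delta^{f_{\bg_s}}_{\bg_s}v_s+\EE_s v_s$ on unit-length time intervals, exploiting $\sup_M|\EE_s|\to0$, but the interpolation argument is cleaner given that the requisite uniform bounds are already in hand.
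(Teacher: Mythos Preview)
Your proposal is correct and follows essentially the same route as the paper: reduce to estimating $\|\bf_s-f_{\bg_s}\|_{C^0}$ via the change of variable $s=\log|t|$, pass from $\bf_s-f_{\bg_s}$ to $v_s-1$ using the Lipschitz bound on $\log$ over $[C_1^{-1},C_1]$, and then interpolate the $L^2$-decay $Z(s)\le Cs^{-\theta}$ against uniform higher-order bounds to obtain $C^0$-decay at the cost of the small loss $\theta'<\theta$. The only cosmetic differences are that the paper interpolates $\bf_s-f_{\bg_s}$ itself between $L^2_{g_o}$ and $W^{N,2}_{g_o}$ (citing Hamilton's interpolation inequalities) rather than $v_s-1$ between $L^2$ and $C^m$, and is less explicit than you about the scaling reduction identifying $f^*_{g_t}$ with $f_{\bg_s}\circ\psi_s^{-1}$.
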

\begin{proof}
Indeed, we need only to estimate $\left|\bf_s-f_{\bg_s}\right|$. By Lemma \ref{v is bdd}, we have
\begin{align*}
    \left|\bf_s-f_{\bg_s}\right|\leq \left(\sup_{x\in[C_1^{-1},C_1]}\left|(\log x)'\right|\right)\cdot\left|v_s-1\right|\leq C|v_s-1|.
\end{align*}
Hence, we have
\begin{align*}
    \left\|\bf_s-f_{\bg_s}\right\|_{L^2_{g_o}}\leq C\|v_s-1\|_{L^2_{g_o}}\leq C\|v_s-1\|_{L^2_{\bar\nu_s}}=CZ(s)\leq Cs^{-\theta}.
\end{align*}

Furthermore, since, by the standard parabolic derivative estimates and the regularity of $\bg_s$, the higher derivatives of $\bf_s$ and $f_{\bg_s}$ are bounded uniformly in $s$, we may apply the standard Sobolev embedding and interpolation formula (c.f. \cite{Ham82}). Specifically, for any $\theta'\in(0,\theta)$, we may find an integer $N\gg 1$, such that
\begin{align*}
   \left\|\bf_s-f_{\bg_s}\right\|_{C^0}\leq C(\theta') \left\|\bf_s-f_{\bg_s}\right\|^{\theta'/\theta}_{L^2_{g_o}}\cdot \left\|\bf_s-f_{\bg_s}\right\|^{1-\theta'/\theta}_{W^{N,2}_{g_o}}\leq Cs^{-\theta'}.
\end{align*}
The conclusion of the proposition follows from the definition of $\bf_s$ and $f_{\bg_s}$, and the change of variable $s=\log(-t)=\log|t|$.
\end{proof}

\bigskip

\noindent\textbf{(2) Forward case.} The forward case is almost identical to the backward one. We will omit most of the computational details and will focus on the points where it is different from the former case. Defining $v_s$, $\EE_s$ as in (\ref{someotherbasicdefinitions}), and $\bar\nu_s$, $Z(s)$ as in (\ref{somebasicdefinitions}), we have
\begin{gather*}
    \partial_sv_s=-\Delta_{\bg_s}^{f_{\bg_s}}v_s+\EE_sv_s,\\
    \partial_s d\bar\nu_s
    = -\EE_s \, d\bar\nu_s,\\
    \int_M v_sd\bar\nu_s\equiv 1.
\end{gather*}
Then, we may compute as in \eqref{somenonsensecomputation}
\begin{align*}
      (Z^2)'(s)
    &= \int 2(v_s-1)\left(-\Delta_{\bg_s}^{f_{\bg_s}} v_s+\EE_s v_s\right)  \, d\bar\nu_s-\int (v_s-1)^2 \EE_s  \, d\bar\nu_s\\\nonumber
    &=2\int |\nabla_{\bg_s} (v_s-1)|^2  \, d\bar\nu_s+ \int (v_s^2-1)\EE_s  \, d\bar\nu_s\\\nonumber
    &\ge  \int (v_s-1)^2  \, d\bar\nu_s - \sup_M \left(|v_s+1|\cdot|\EE_s|\right)\int |v_s-1| \, d\bar\nu_s\\\nonumber
     &=Z^2(s)-C\sup_M|\EE_s|\cdot Z(s),
     \end{align*}
where we have applied Lemma \ref{Poincare} and \eqref{eq: v bdd}; obviously, the latter formula is also valid in this case due to the gaussian estimates of \cite[Proposition 2.7, Proposition 2.8]{MM15}. Arguing as in (\ref{EstimateofEE}) and using (\ref{forwardconvergencerate}), we have
\begin{equation*}
    \sup_M|\EE_s|\leq C\|\partial_s\bg_s\|_{C^0_{g_o}}\leq -C\tfrac{d}{ds}\left(\mu_{g_o}-\mu_{\bg_s}\right)^{1-(2-\beta)\alpha}.
\end{equation*}
Therefore, we have 
\begin{align*}
    Z'(s)\ge \tfrac{1}{2} Z(s)+C\tfrac{d}{ds}\left(\mu_{g_o}-\mu_{\bg_s}\right)^{1-(2-\beta)\alpha},
\end{align*}
and 
\begin{align*}
    \tfrac{d}{ds}\left(Z(s)-C\left(\mu_{g_o}-\mu_{\bg_s}\right)^{1-(2-\beta)\alpha}\right)\ge \tfrac{1}{2}Z(s)\ge \tfrac{1}{2}\left(Z(s)-C\left(\mu_{g_o}-\mu_{\bg_s}\right)^{1-(2-\beta)\alpha}\right).
\end{align*}

Defining $$\xi(s):=Z(s)-C\left(\mu_{g_o}-\mu_{\bg_s}\right)^{1-(2-\beta)\alpha},$$
we have
\begin{align}\label{anothernonsensedifferentialineq}
    \xi'(s)\geq \tfrac{1}{2}\xi(s)\quad\text{ for all }\quad s\ge 0.
\end{align}
If there is a $s_0\ge 0$ such that $\xi(s_0)>0$, then, integrating \eqref{anothernonsensedifferentialineq} from $s_0$ to $s\in(s_0,\infty)$, we have
$$\xi(s)\ge \xi(s_0)\exp\left(\tfrac{1}{2}(s-s_0)\right)\to \infty\quad\text{ as }\quad s\to \infty;$$
this obviously is a contradiction. 

In conclusion, we have that $\xi(s)\leq 0$ for all $s\in[0,\infty)$, and hence
$$Z(s)\le C\left(\mu_{g_o}-\mu_{\bg_s}\right)^{1-(2-\beta)\alpha}\leq Cs^{-\theta}\quad\text{ for all }\quad s>0,$$
where $\theta:=\frac{1-(2-\beta)\alpha}{2\alpha-1}\in (0,1)$. The rest of the argument is identical to the forward case. We summarize the above results in the following proposition.

\begin{Proposition}
Let $(M^n,g_t)_{t\in[-T,0)}$ be the Type I Ricci flow in the statement of Theorem \ref{Thm_main_2}, where $M$ is a closed manifold. Let $d\mu_t:=(4\pi|t|)e^{-f_t}dg_t$ be the referential conjugate heat flow, which is a singular conjugate heat kernel defined in \cite{MM15}. Let $f^*_{g_t}$ be the minimizer of $\mu(g_t,|t|)$, where $\mu$ is Perelman's $\mu$-functional. Then for any $\theta'\in(0,\theta)$, there is a positive constant $C$, such that
\[
    \|f_t - f^*_{g_t}\|_{C^0}
    \le C(-\log|t|)^{-\theta'}\quad \text{ for all }\quad t\in [-T/2,0).
\]
Note that the $C^0$ norm is independent of the choice of the metric.
\end{Proposition}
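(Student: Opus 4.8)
The plan is to repeat, essentially verbatim, the proof of Proposition~\ref{conclusionofthereferentialconvergence}, now feeding in the decay $Z(s)\le Cs^{-\theta}$ just established in the forward case. First I would fix the dictionary between the original Ricci flow and the forward modified Ricci flow: by \eqref{modified_RF} we have $\tg_s=e^{s}g_{-e^{-s}}$ and $\bg_s=\psi_s^*\tg_s$, so that the substitution $s=-\log|t|$ (equivalently $t=-e^{-s}$) turns $t\to 0^-$ into $s\to\infty$, while the parabolic rescaling together with the pullback by $\psi_s$ identifies the minimizer $f^*_{g_t}$ of $\mu(g_t,|t|)$ with $f_{\bg_s}=P(\bg_s)$, and $f_t$ with $\bf_s$. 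Thus the assertion reduces to showing $\|\bf_s-f_{\bg_s}\|_{C^0}\le Cs^{-\theta'}$ for all $s\ge 0$ and then substituting $s=-\log|t|$; this substitution is legitimate for $t$ in a left-neighborhood of the singular time $0$ containing $[-T/2,0)$, once $s$ has been shifted as in the opening paragraphs of this section so that $\bg_s$ stays in the regular neighborhood of $g_o$ for every $s\ge 0$.

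Next, by Lemma~\ref{v is bdd} --- whose proof in the present case invokes the Gaussian upper and lower bounds of \cite[Proposition~2.7, Proposition~2.8]{MM15} in place of \cite[Theorem~3.5]{X17} --- the function $v_s=e^{f_{\bg_s}-\bf_s}$ stays in a fixed compact subinterval $[C_1^{-1},C_1]\subset(0,\infty)$, so the mean value theorem applied to $\log$ yields $|\bf_s-f_{\bg_s}|\le C|v_s-1|$ pointwise. Integrating this against $d\bar\nu_s$ and using that $d\bar\nu_s$ and $dg_o$ are uniformly mutually absolutely continuous (again because $\bg_s$, and hence $f_{\bg_s}$, stay close to $g_o$, $f_o$), I obtain
\[
\|\bf_s-f_{\bg_s}\|_{L^2_{g_o}}\le C\|v_s-1\|_{L^2_{\bar\nu_s}}=CZ(s)\le Cs^{-\theta}.
\]

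To promote this $L^2$ decay to a $C^0$ decay at the slightly smaller rate $\theta'<\theta$, I would first note that $\bf_s$ and $f_{\bg_s}$ have all higher-order spatial derivatives bounded uniformly in $s$: for $\bf_s$ this comes from the standard parabolic derivative estimates applied to the forward analogue of \eqref{eqn for fb-bf}, whose coefficients are controlled by the uniform regularity of $\bg_s$ recorded in \eqref{bg vs go} and by the bound $\|f_{\bg_s}\|_{C^0}\le C$, while for $f_{\bg_s}=P(\bg_s)$ it follows from elliptic bootstrapping on its Euler--Lagrange equation. Then, for an integer $N\gg 1$ chosen in terms of $\theta'/\theta$, the interpolation inequality (c.f. \cite{Ham82}) gives
\[
\|\bf_s-f_{\bg_s}\|_{C^0}\le C(\theta')\,\|\bf_s-f_{\bg_s}\|_{L^2_{g_o}}^{\theta'/\theta}\,\|\bf_s-f_{\bg_s}\|_{W^{N,2}_{g_o}}^{1-\theta'/\theta}\le C s^{-\theta'},
\]
and reverting via $s=-\log|t|$ proves the proposition.

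I do not anticipate a genuine obstacle here: the analytic heart of the matter --- the differential inequality for $Z(s)$ and its integration --- has already been carried out for the forward case above, and the present argument is purely a translation plus an interpolation. The two places that require a little care are (i) the bookkeeping of the change of variables $t\leftrightarrow s$ together with the action of $\psi_s$, so that $f^*_{g_t}$ is genuinely identified with $f_{\bg_s}$ and not merely with a diffeomorphic copy of it; and (ii) the verification that the Gaussian-type bounds needed for Lemma~\ref{v is bdd} hold for the \emph{singular} conjugate heat kernel constructed in \cite{MM15} --- this is precisely why the statement is formulated using the estimates of \cite{MM15}, and why $t$ is required to stay away from the initial time $-T$ while being allowed to approach the singular time $0$.
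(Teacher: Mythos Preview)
Your proposal is correct and follows essentially the same approach as the paper: the paper itself says ``The rest of the argument is identical to the [backward] case,'' and what you have written is precisely the forward-case transcription of the proof of Proposition~\ref{conclusionofthereferentialconvergence}, with the Gaussian bounds of \cite{MM15} replacing those of \cite{X17} in Lemma~\ref{v is bdd} and the change of variables $s=-\log|t|$ replacing $s=\log|t|$. Your care about point~(i) is well placed; note that strictly speaking $f^*_{g_t}$ corresponds to $f_{\bg_s}\circ\psi_s^{-1}$ rather than $f_{\bg_s}$, but since $\psi_s$ is a diffeomorphism this is immaterial for the $C^0$ norm.
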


\section{\texorpdfstring{$\mathbb{F}$}{F}-distance estimate}

Finally, we prove Theorem \ref{Thm_main_1} and Theorem \ref{Thm_main_2} in this section. The following result due to Bahuaud-Guenther-Isenberg \cite{BGI20} will be applied to convert the closeness between $\bg_s$ and $g_o$ to the closeness between the scaled ancient Ricci flow and the canonical form of the shrinker.

\begin{Theorem}[Theorem A in \cite{BGI20}]
\label{thm: stability}
Let $M^n$ be a closed manifold and $(M,g_0(t))_{t\in [0,\tau_0)}$ be the maximal solution to the Ricci flow with initial metric $g_0(0)=\bar g_0,$ where $\tau_0\in (0,\infty].$
Then for any $\tau\in (0,\tau_0)$, any integer $k\ge 4$, and any $\gamma\in (0,1),$ there are $r>0$ and $C<\infty$ depending on $\bar g_0$ and  $\tau$ such that
if $\bar g_1$ is another smooth metric on $M$ satisfying
\[
    \|\bar g_1 - \bar g_0\|_{C^{k,\gamma}}
    \le r,
\]
then the maximal Ricci flow $g_1(t)$ starting at $\bar g_1$ exist on $[0,\tau]$ and
\[
    \|g_1(t)-g_0(t)\|_{C^{k-2,\gamma}}
    \le C\|\bar g_1 - \bar g_0\|_{C^{k,\gamma}},
\]
for all $t\in [0,\tau].$ Here, all the H\"{o}lder norms are induced from some fixed background metric on $M.$

\end{Theorem}

Note that in their statements in \cite{BGI20}, the authors used the notation $h^{k,\gamma}$ to denote the completion of smooth sections with respect to the H\"{o}lder norm $C^{k,\gamma},$ which is strictly contained in the usual H\"{o}lder space. We shall not make the explicit distinction here as we only need the H\"{o}lder norms.

The proof of Theorem \ref{Thm_main_1} and Theorem \ref{Thm_main_2} are almost identical, we shall only consider the former. Let $(M^n,g_t)_{t\in(-\infty,0]}$ be the Ricci flow in the statement of Theorem \ref{Thm_main_1}. Let $(M^n,g_o,f_o)$ be the Ricci shrinker whose canonical form $(M,g^o_t,f^o_t)_{t\in(-\infty,0)}$ is the unique tangent flow at infinity of $(M^n,g_t)_{t\in(-\infty,0]}$.  Specifically, let $\Phi_t$ be the group of $1$-parameter family of diffeomorphisms generated by $\nabla_{g_o}f_o,$ then we have
$$g^o_t=|t|\Phi^*_{-\log|t|} g_o, \quad f^o_t=\Phi^*_{-\log|t|}f.$$
Let $A>1$ be an arbitrarily fixed constant and $I_A:=[-A,-1/A].$
For any $\lambda\ll A^{-1},$ write
$g^\lambda_t:= \lambda^2 g_{t/\lambda^2}$
and $s_-:= \log(A/\lambda^2).$ Then, by \eqref{backwardconvergencerate}, we have
\[
    \|\Phi_{-\log A}^*\psi_{s_-}^*g^{\lambda}_{-A}
    - g^o_{-A}\|_{C_{g_o}^{k,\gamma}}
    =  \|\Phi_{-\log A}^*(A\bg_{s_-}
    - Ag_o)\|_{C_{g_o}^{k,\gamma}}
    \le C(A)(-\log \lambda)^{-\theta},
\]
where $\psi_s$ is defined in \eqref{backward_modified_RF}. By Theorem \ref{thm: stability}, if $\lambda\leq\overline{\lambda}(A)$, then we have
\begin{equation}\label{stabilitynonsense}
\sup_{t\in I_A} \|\Psi_\lambda^* g^{\lambda}_t
- g^o_t\|_{C_{g_o}^{k-2,\gamma}}
\le 
C(A) (-\log \lambda)^{-\theta},
\end{equation}
where $\Psi_\lambda:= \psi_{s_-}\circ \Phi_{-\log A}$ is a diffeomorphism, and we are using $g_o$ as the fixed background metric.
In the following, we may assume that $\Psi_{\lambda}={\rm id}$ by considering the pullback flow.

Let $d\mu_t:=u_tdg_t:=(4\pi|t|)e^{-f_t}$ be the referential conjugate heat flow in the statement of Theorem \ref{Thm_main_1}, and $d\mu^\lambda_t:=u^\lambda_tdg^\lambda_t:=(4\pi|t|)e^{-f^\lambda_t}dg^\lambda_t$ its scaled version, where $f^\lambda_t=f_{t/\lambda^2}$. Let $d\mu^o_t:=u^o_tdg^o_t:=(4\pi|t|)e^{-f^o_t}dg^o_t$ be the referential conjugate heat kernel of $(M,g^o_t)$. We shall now compare $\mu^\lambda_t$ and $\mu^o_t$.

By (\ref{stabilitynonsense}) (note that $\Psi^*_\lambda$ is taken to be $\operatorname{id}$), we have
\begin{align}\label{nonsensenonsense}
    \Big\||t|^{-1}\Phi^*_{\log|t|}g^\lambda_t-g_o\Big\|_{C^{k-2,\gamma}_{\Phi^*_{\log|t|}g_o}}&=|t|^{-1}\Big\||t|\Phi^*_{-\log|t|}(|t|^{-1}\Phi^*_{\log|t|}g^\lambda_t-g_o)\Big\|_{C_{g_o}^{k-2,\gamma}}
    \\\nonumber
    &\le C(A)(-\log \lambda)^{-\theta}\quad\text{ for all }\quad t\in I_A.
\end{align}
Now we measure the first norm with the metric $g_o$ instead. 
Since $I_A$ is a compact interval, we have
\begin{equation}\label{uniformequivalence}
    C(A)^{-1}g_o\le \Phi^*_{\log|t|}g_o\le C(A)g_o\quad\text{ for all }\quad t\in I_A.
\end{equation}
Let
$h:=|t|^{-1}\Phi^*_{\log|t|}g^\lambda_t-g_o$ and $\tg:=\Phi^*_{\log|t|}g_o$. By \eqref{uniformequivalence}, we have
$$|h|^2_{g_o}=g_o^{ik}g_o^{jl}h_{ij}h_{kl}\le C(A)^2\tg^{ik}\tg^{jl}h_{ij}h_{kl}\le C(A)|h|^2_{\tg}.$$
Hence, by (\ref{nonsensenonsense}), we have \begin{align}\label{nonsensenonsense1}
    \Big\||t|^{-1}\Phi^*_{\log|t|}g^\lambda_t-g_o\Big\|_{C^0_{g_o}}&\le C(A)\Big\||t|^{-1}\Phi^*_{\log|t|}g^\lambda_t-g_o\Big\|_{C^{0}_{\Phi^*_{\log|t|}g_o}}
    \\\nonumber
    &\le C(A)(-\log \lambda)^{-\theta}\quad\text{ for all }\quad t\in I_A.
\end{align}
Furthermore, by the standard Shi's estimates for $g^\lambda_t$ and the fact that $I_A$ is a compact interval, we obviously have
\begin{align}\label{nonsensenonsense2}
    \Big\||t|^{-1}\Phi^*_{\log|t|}g^\lambda_t-g_o\Big\|_{C^l_{g_o}}\le C(A,l)\quad\text{ for all }\quad t\in I_A,
\end{align}
so long as we take $\lambda\le \overline{\lambda}(A)$ for $\overline{\lambda}(A)$ small enough. Applying the standard interpolation formula with (\ref{nonsensenonsense1}) and (\ref{nonsensenonsense2}), we have that, for any $\theta'\in(0,\theta)$,
$$\Big\||t|^{-1}\Phi^*_{\log|t|}g^\lambda_t-g_o\Big\|_{C^{k,\gamma}_{g_o}}\le C(k,\theta')\Big\||t|^{-1}\Phi^*_{\log|t|}g^\lambda_t-g_o\Big\|^{\theta'/\theta}_{C^{0}_{g_o}}\le C(A,\theta')(-\log \lambda)^{-\theta'}\quad\text{ for all }\quad t\in I_A.$$
Taking $\overline{\lambda}(A)>0$ to be small enough and letting $\lambda\leq \overline{\lambda}(A)$, we may apply Theorem \ref{derivative bdd of P} to obtain
$$\left\| f_{|t|^{-1}\Phi^*_{\log|t|}g^\lambda_t}-f_o\right\|_{C^0}\leq C(A,\theta') (-\log \lambda)^{-\theta'}\quad\text{ for all }\quad t\in I_A,$$
and hence 
\begin{equation}\label{Theclosenessofanothernonsense}
    \left\|f^*_{g^\lambda_t}-f^o_t\right\|_{C^0}\leq C(A,\theta') (-\log \lambda)^{-\theta'}\quad\text{ for all }\quad t\in I_A,
\end{equation}
where $f^*_{g_t^\lambda}$ is the minimizer of $\mu(g_t^\lambda,|t|)$. Note that the $C^0$ norm is independent of the choice of the metric, and $f_o=f_{|t|^{-1}\Phi^*_{\log|t|}g_t^o}$. Combining \eqref{Theclosenessofanothernonsense} with Proposition \ref{conclusionofthereferentialconvergence}, we have
$$\|f^\lambda_t-f^o_t\|_{C^0}\le C(A,\theta') (-\log \lambda)^{-\theta'}\quad\text{ for all }\quad t\in I_A,$$
and consequently 
\begin{equation}\label{lastofalltheclosenessbetweenreferentialchks}
    \|u^\lambda_t-u^o_t\|_{C^0}\le C(A,\theta') (-\log \lambda)^{-\theta'}\quad\text{ for all }\quad t\in I_A.
\end{equation}
Henceforth, we fix a $\theta'\in(0,\theta)$.

Finally, we are ready to apply Corollary \ref{prop: F dist} with \eqref{stabilitynonsense} and \eqref{lastofalltheclosenessbetweenreferentialchks} to conclude the proof of Theorem \ref{Thm_main_1}. Define $\varepsilon:=C(A,\theta')(-\log\lambda)^{-\theta'}$. Let  $\mathfrak{C}=(Z_t,(\phi^\lambda_t, \phi^o_t))_{t\in I_A}$ be the correspondence between $(M,g^\lambda_t,\mu^\lambda_t)_{t\in I_A}$ and $(M,g^o_t,\mu^o_t)_{t\in I_A}$ constructed at the beginning of Section 3.
Fix $t\in I_A$ and let $F:Z_t\to \mathbb{R}$ be any $1$-Lipschitz function. Write $F_\lambda=F\circ{\phi^\lambda_t}$ and $F_o=F\circ{\phi^o_t}$. Since $F$ is $1$-Lipschitz, for any $x\in M,$ we have
\[
    |F_\lambda(x)-F_o(x)|
    = |F(\phi^\lambda_t(x))
    - F(\phi^o_t(x))|
    \le \dist^{Z_t}(\phi_t^\lambda(x),
    \phi^o_t(x))
    = \varepsilon.
\]
By adding a constant to $F$, we may assume that $F_\lambda$ vanishes at one point on $M$. 
It follows that
\begin{align*}
    &\int_{Z_t} F\, d \phi^{\lambda}_{t*}\mu^{\lambda}_t
    - \int_{Z_t} F\, d \phi^{o}_{t*}\mu^{o}_t
    =
    \int_{M} F_\lambda \,d\mu^{\lambda}_t
    -\int_{M} F_o \, d\mu^{o}_t\\
   \le & \ 
    \int_{M} F_\lambda \,d\mu^{\lambda}_t
    -\int_{M} F_\lambda \, d\mu^{o}_t + \varepsilon
    = \int_{M} F_\lambda u^\lambda_t\,dg^{\lambda}_t
    -\int_{M} F_\lambda \, u^o_t \,dg^o_{t} + \varepsilon\\
    \le&\ 
    \int_M F_\lambda(u^{\lambda}_t - u^o_t)\,dg^{\lambda}_t
    + \int_{M} F_\lambda \, u^o_t \left(dg_{t}^\lambda-dg^o_{t}\right)
    + \varepsilon\\
    \le&\ 
    C(A) (-\log \lambda)^{-\theta'}.
\end{align*}
Note that the bound of $F_\lambda$ comes from the fact that $F_\lambda$ attains $0$ at some point on $M$, that $F_\lambda$ is $1$-Lipschitz, and that $(M,g^\lambda_t)$ has uniformly bounded diameter for $t\in I_A$. Since $F$ is arbitrary, we have
\[
    \sup_{t\in I_A} \dist^{Z_t}_{{\rm W}_1}
    \left(
    \phi^{\lambda}_{t*}\mu^{\lambda}_t,
    \phi^{o}_{t*}\mu^{o}_t
    \right)
    \le  C(A) (-\log \lambda)^{-\theta'},
\]
and Theorem \ref{Thm_main_1} follows from Corollary \ref{prop: F dist}.


\begin{thebibliography}{CCG{\etalchar{+}}10}






\bibitem[Ac12]{Ac12} Ache, Antonio G. \emph{On the uniqueness of asymptotic limits of the Ricci flow.} arXiv preprint arXiv:1211.3387 (2012).


\bibitem[AA81]{AA81} Allard, William K.; Almgren, Frederick J., Jr. \emph{ On the radial behavior of minimal surfaces and the uniqueness of their tangent cones}. Ann. of Math. (2) 113 (1981), no. 2, 215–265.


\bibitem[Bam20a]{Bam20a}
Richard~H. Bamler,  \emph{{Entropy and heat kernel bounds on a Ricci flow background}},
  https://arxiv.org/abs/2008.07093 (2020).

\bibitem[Bam20b]{Bam20b}
\bysame, \emph{{Compactness theory of the space of super Ricci flows}},
  https://arxiv.org/abs/2008.09298 (2020).
  
\bibitem[Bam20c]{Bam20c} \bysame, \emph{{Structure theory of non-collapsed limits of Ricci flows}},   https://arxiv.org/abs/2009.03243 (2020).


\bibitem[BGI20]{BGI20}
Bahuaud, Eric, Christine Guenther, and James Isenberg. \emph{Convergence stability for Ricci flow.}  The Journal of Geometric Analysis 30.1 (2020): 310-336.















 





















\bibitem[CT94]{CT94}Cheeger, Jeff; Tian, Gang. \emph{ On the cone structure at infinity of Ricci flat manifolds with Euclidean volume growth and quadratic curvature decay.} Invent. Math. 118 (1994), no. 3, 493–571.


\bibitem[CC97]{CC97} Cheeger, Jeff, and Tobias H. Colding. \emph{On the structure of spaces with Ricci curvature bounded below. I.} Journal of Differential Geometry 46.3 (1997): 406-480.


\bibitem[CC20a]{CC20a} Cheeger, Jeff, and Tobias H. Colding. \emph{On the structure of spaces with Ricci curvature bounded below. II.} Journal of Differential Geometry 54.1 (2000): 13-35.


\bibitem[CC20b]{CC20b} Cheeger, Jeff, and Tobias H. Colding. \emph{On the structure of spaces with Ricci curvature bounded below. III.} Journal of Differential Geometry 54.1 (2000): 37-74.


\bibitem[CM14]{CM14}
Colding, Tobias Holck, William P. Minicozzi. \emph{On uniqueness of tangent cones for Einstein manifolds.} Invent. math. \textbf{196.3} (2014): 515-588.
 
\bibitem[CM15]{CM15}
Colding, Tobias Holck, William P. Minicozzi. \emph{Uniqueness of blowups and \L ojasiewicz inequalities.} Ann. of math. (2015): 221-285.



\bibitem[CMZ21a]{CMZ21a} Chan, Pak-Yeung, Zilu Ma, and Yongjia Zhang. \emph{Ancient Ricci flows with asymptotic solitons.} arXiv preprint arXiv:2106.06904 (2021).


\bibitem[CMZ21c]{CMZ21c} Chan, Pak-Yeung, Zilu Ma, and Yongjia Zhang. \emph{On Ricci flows with closed and smooth tangent flows.} arXiv preprint arXiv:2109.14763 (2021).


























\bibitem[CM21]{CM21} Colding, Tobias Holck, William P. Minicozzi. \emph{Singularities of Ricci flow and diffeomorphisms.} 	arXiv:2109.06240 
 
 









































\bibitem[Ham82]{Ham82} Hamilton, Richard S. \emph{Three-manifolds with positive Ricci curvature.} Journal of Differential geometry 17.2 (1982): 255-306.

\bibitem[J05]{J05} Jost, J\"urgen. \emph{Postmodern analysis}. Springer Science \& Business Media, 2005.
























\bibitem[MM15]{MM15} Mantegazza, Carlo; Müller, Reto, \emph{ Perelman's entropy functional at Type I singularities of the Ricci flow}. J. Reine Angew. Math. 703 (2015), 173–199.


















\bibitem[Se06]{Se06} Sesum, Natasa. \emph{Convergence of the Ricci flow toward a soliton.} Communications in Analysis and Geometry 14.2 (2006): 283-343.

\bibitem[S83]{S83} Simon, Leon, \emph{ Asymptotics for a class of nonlinear evolution equations, with applications to geometric problems.} Ann. of Math. (2) 118 (1983), no. 3, 525–571.




\bibitem[St06a]{St06a} Sturm, Karl-Theodor, \emph{On the geometry of metric measure spaces}. I, Acta Math. 196 (2006), no. 1, 65–131. 

\bibitem[St06b]{St06b} \bysame, \emph{On the geometry of metric measure spaces}.  II, Acta Math. 196 (2006), no. 1, 133–177.














\bibitem[SW15]{SW15}Sun, Song; Wang, Yuanqi, \emph{On the K\"{a}hler-Ricci flow near a K\"{a}hler-Einstein metric}. J. Reine Angew. Math. 699 (2015), 143–158.




\bibitem[W83]{W83} White, Brian. \emph{ Tangent cones to two-dimensional area-minimizing integral currents are unique.} Duke Math. J. 50 (1983), no. 1, 143–160.


\bibitem[X17]{X17} Xu, Guoyi. \emph{An equation linking $\mathcal W$-entropy with reduced volume.} Journal f\"ur die reine und angewandte Mathematik (Crelle's Journal) 2017.727 (2017): 49-67.




















\end{thebibliography}
\bibliographystyle{amsalpha}

\newcommand{\etalchar}[1]{$^{#1}$}
\providecommand{\bysame}{\leavevmode\hbox to3em{\hrulefill}\thinspace}
\providecommand{\MR}{\relax\ifhmode\unskip\space\fi MR }
\providecommand{\MRhref}[2]{%
  \href{http://www.ams.org/mathscinet-getitem?mr=#1}{#2}
}
\providecommand{\href}[2]{#2}

\bigskip
\bigskip


\noindent Department of Mathematics, University of California, San Diego, CA 92093, USA
\\ E-mail address: \verb"pachan@ucsd.edu "
\\

\noindent Department of Mathematics, University of California, San Diego, CA 92093, USA
\\ E-mail address: \verb"zim022@ucsd.edu"
\\

\noindent School of Mathematics, University of Minnesota, Twin Cities, MN 55414, USA
\\ E-mail address: \verb"zhan7298@umn.edu"

\end{document}